\DeclareMathAlphabet      {\mathup}{OT1}{\familydefault}{m}{n}
\DeclareMathAlphabet{\mathpzc}{OT1}{pzc}{m}{it}
\DeclareSymbolFont{bbold}{U}{bbold}{m}{n}
\DeclareSymbolFontAlphabet{\mathbbold}{bbold}
\definecolor{grassgreen}{RGB}{92,135,39}
\newtheorem{remark}{Remark}[section]
\newcommand{\qedhere}{\proofbox}
\newcommand{\hilb}{\mathcal{V}}
\renewcommand{\vec}[1]{{\mathchoice
                     {\mbox{\boldmath$\displaystyle{#1}$}}
                     {\mbox{\boldmath$\textstyle{#1}$}}
                     {\mbox{\boldmath$\scriptstyle{#1}$}}
                     {\mbox{\boldmath$\scriptscriptstyle{#1}$}}}}
\newcommand{\trace}{\mathsf{tr}\,}
\newcommand{\diag}{\mathsf{diag}\,}
\newcommand{\eps}{\varepsilon}
\newcommand{\norm}[1]{\left\| {#1} \right\|}
\newcommand{\ip}[2]{{\left\langle {#1}, {#2} \right\rangle}}
\newcommand{\mip}[2]{\left\langle{#1}, {#2}\right\rangle_{\!\scriptscriptstyle{\boldsymbol{M}}}}
\newcommand{\mat}[1]{\boldsymbol{\mathrm{#1}}}
\newcommand\restr[2]{{
  \left.\kern-\nulldelimiterspace 
  {#1}\vphantom{\big|} \right|_{#2}}}
\newcommand{\Rnm}{\hilb_\Nm}
\newcommand{\R}{\mathbb{R}}
\newcommand{\A}{\mathcal{A}}
\newcommand{\C}{\mathcal{C}}
\newcommand{\D}{\mathcal{D}}
\renewcommand{\P}{\mathcal{P}}
\newcommand{\ave}{\operatorname{E}}
\newcommand{\GM}[2]{\mathcal{N}\!\left( {#1}, {#2}\right)}
\newcommand*\oline[1]{%
  \vbox{%
    \hrule height 0.6pt
    \kern0.25ex
    \hbox{%
      \kern-0.15em
      \ifmmode#1\else\ensuremath{#1}\fi
      \kern-0.15em
    }
  }
}
\newcommand{\Cprior}{\mathcal{C}_{\mathup{pr}}}
\newcommand{\Cpost}{\mathcal{C}_{\mathup{post}}}
\newcommand{\ncov}{\vec{\Gamma}_{\!\mathup{noise}}}
\newcommand{\like}{\pi_{\mathup{like}}}
\newcommand{\obs}{\vec{y}}
\newcommand{\ipar}{\theta}
\newcommand{\iparpr}{\ipar_{\mathup{pr}}}
\newcommand{\iparmap}{\ipar_{\scriptscriptstyle\mathup{MAP}}}
\newcommand{\dpar}{\boldsymbol{\theta}}
\newcommand{\dpostmean}{\dpar_\text{post}^\obs}
\newcommand{\dparpr}{\dpar_{\mathup{pr}}}
\newcommand{\priorcov}{\boldsymbol{\Gamma}_{\mathup{prior}} }
\newcommand{\postcov}{\mat{\Gamma}_{\text{post}} }
\newcommand{\priorm}{\mu_{\text{pr}}}
\newcommand{\postm}{\mu_{\text{post}}^{{\obs}}}
\newcommand{\M}{\mat{M}}                       
\newcommand{\Ns}{{n_s}}
\newcommand{\Nt}{{n_t}}
\newcommand{\Nd}{{n_y}}
\newcommand{\Nm}{{n}}
\renewcommand{\L}{\mathup{L}}
\newcommand{\Lsym}{\mathup{L}_\mathup{sym}}
\newcommand{\eeta}{\vec{\eta}}
\newcommand{\eip}[2]{\left\langle{#1}, {#2}\right\rangle}
\newcommand{\Hd}{\mat{H}}
\newcommand{\HMd}{\mat{H}_{\mathup{m}}}
\newcommand{\HMpd}{\pmb{\mathcal{H}}_\mathup{m}}
\newcommand{\HMs}{\pmb{\mathbb{H}}_\mathup{m}}
\newcommand{\commentout}[1]{\iffalse {#1} \fi}
\newcommand{\FF}{\mat{F}}
\newcommand{\FFp}{\pmb{\mathcal{F}}}
\newcommand{\logdet}{\log\det}
\newcommand{\DKL}[2]{D_\mathup{KL}\left({#1} \| {#2}\right)}
\newcommand{\DKLa}[2]{\widehat{D}_\mathup{KL}\left({#1} \| {#2}\right)}
\newcommand{\cipfd}[2]{\displaystyle\left\langle{#1},{#2}\right\rangle_{\!\priorcov^{-1}}}
\newcommand{\avep}[1]{\ave_{\priorm}\left\{ {#1} \right\}}
\newcommand{\avey}[1]{\ave_{{\obs|\dpar}}\left\{ {#1} \right\}}
\newcommand{\expect}[2][\boldsymbol{\Omega}]{\mathbb{E}_{#1}\! \left[ #2\right]}
\newcommand{\bmat}[1]{\begin{bmatrix} #1 \end{bmatrix}}
\newcommand{\dd}{\bar{\obs}}
\newcommand{\obsop}{\mathcal{B}}
\newcommand*{\transymb}{\mkern-1.5mu\mathsf{T}}                
\newcommand*{\tran}{^{\transymb}}                
\title{Efficient D-optimal design of experiments for infinite-dimensional Bayesian linear inverse problems}
\author{
Alen Alexanderian\thanks{Department of Mathematics, North Carolina State University,
Raleigh, NC 27695-8205, USA 
(\texttt{alexanderian@ncsu.edu}, \texttt{http://www4.ncsu.edu/{\char'176}aalexan3/})}
\and	
Arvind K. Saibaba\thanks{Department of Mathematics, North Carolina State University,
Raleigh, NC 27695-8205, USA 
(\texttt{asaibab@ncsu.edu}, \texttt{http://www4.ncsu.edu/{\char'176}asaibab/})}
}
\begin{document}
\maketitle

\begin{abstract} We develop a computational framework for D-optimal
experimental design for PDE-based Bayesian linear inverse problems
with infinite-dimensional parameters.
We follow a formulation of the 
experimental design problem that remains valid in the infinite-dimensional limit. 
The optimal design is obtained by solving
an optimization problem that involves repeated evaluation
of the log-determinant of high-dimensional operators
along with their derivatives. Forming and manipulating these operators is
computationally prohibitive for large-scale problems.  Our methods exploit
the low-rank structure in the inverse problem in three different ways, yielding efficient algorithms.
Our main approach is to use randomized estimators for computing the D-optimal
criterion, its derivative, as well as the Kullback--Leibler divergence from
posterior to prior.  Two other alternatives are proposed based on low-rank
approximation of the prior-preconditioned data misfit Hessian, and a fixed low-rank approximation of the
prior-preconditioned forward operator. Detailed error analysis is provided
for each of the methods, and their effectiveness is demonstrated on a model sensor
placement problem for initial state reconstruction in a time-dependent
advection-diffusion equation in two space dimensions. 

\end{abstract}

\begin{keywords}
Bayesian Inversion; D-Optimal experimental design; Large-scale ill-posed inverse problems;
Randomized matrix methods; Low-rank approximation; Uncertainty quantification
\end{keywords}

\begin{AMS}
35R30;  
62K05;  
68W20;   
35Q62;  
65C60;  
62F15.  
\end{AMS}


%
%
\section{Introduction}
Mathematical models of physical systems enable predictions of future outcomes.
These mathematical models---typically described by a system of partial differential
equations (PDEs)---often contain parameters such as initial conditions, boundary
data, or coefficient functions that are unspecified and have to be estimated.
This estimation requires solving inverse problems where one uses experimental
data and the model to infer unknown model parameters.  Experimental design is
the process of specifying how measurement data, used in parameter inversion,
are
collected.
%
On the one hand, the quantity and quality of data can critically impact the
accuracy of parameter reconstruction. On the other hand, acquisition of
experimental data is an expensive process that requires the deployment of
scarce resources.  Hence, it is of great practical importance to optimize the
design of experiments so as to maximize the information gained in the parameter
estimation or reconstruction.  

Optimal experimental design (OED) controls experimental conditions by optimizing certain design criteria
subject to physical or budgetary constraints.  In large-scale applications, 
using commonly used experimental
design criteria requires repeated evaluations of functionals (e.g., trace or
determinant) of high dimensional and expensive-to-apply
operators---applications of these operators to vectors involve expensive PDE
solves. This is a fundamental challenge in OED for large-scale PDE-based
Bayesian inverse problems for which we develop novel algorithms.

In our target applications, the Bayesian inverse problem seeks to infer an
infinite-dimensional parameter using experimental measurements, which are
collected at a set of sensor locations. 
The OED problem aims to find an optimal
placement of sensors that optimize the statistical quality of the 
inferred parameter.
We focus on D-optimal
design of sensor networks at which observational data are collected.  That is,
we seek sensor placements that maximize the expected information gain in
parameter inversion.  Our approach, however, is general in that it is
applicable to D-optimal experimental design for a broad class of linear inverse
problems.



\paragraph{Related Work} OED  is a thriving area of
research~\cite{Ucinski05,AtkinsonDonev92,Pukelsheim93,Pazman86,ChalonerVerdinelli95,
BauerBockKorkelEtAl00,KorkelKostinaBockEtAl04,
HaberHoreshTenorio10,HoreshHaberTenorio10,ChungHaber12,
HuanMarzouk13,LongScavinoTemponeEtAl13,HuanMarzouk14,
AlexanderianPetraStadlerEtAl14, LongMotamedTempone15, Ucinski15,
AlexanderianPetraStadlerEtAl16, AlexanderianGloorGhattas16,
BisettiKimKnioEtAl16, CrestelAlexanderianStadlerEtAl17, YuZavalaAnitescu17,
WalshWildeyJakeman17,ruthotto2017optimal}.  In particular, 
A-optimal experimental design
for large-scale applications has been addressed
in~\cite{HaberHoreshTenorio08,HaberHoreshTenorio10,HoreshHaberTenorio10,HaberMagnantLuceroEtAl12,TenorioLuceroBallEtAl13,
AlexanderianPetraStadlerEtAl14,AlexanderianPetraStadlerEtAl2016,CrestelAlexanderianStadlerEtAl17}.
Computing A-optimal designs for large-scale applications faces similar
challenges to the D-optimal designs.  The use of Monte-Carlo trace estimators~\cite{AvronToledo11}
has been instrumental in making A-optimal design computationally feasible for
such applications.

Fast algorithms for computing expected information gain, i.e., the D-optimal
criterion, for nonlinear inverse problems, via Laplace approximations, were
proposed in~\cite{LongMotamedTempone15,LongScavinoTemponeEtAl13}. The focus of
the aforementioned works was efficient computation of the D-optimal criterion.
The optimal design was then computed by an exhaustive search over prespecified
sets of experimental scenarios.  We also
mention~\cite{HuanMarzouk13,HuanMarzouk14,huan2016sequential}
that use polynomial chaos (PC) expansions to build easy-to-evaluate surrogates
of the forward model. The expected information gain is then evaluated using an
appropriate Monte Carlo procedure. In~\cite{HuanMarzouk14,huan2016sequential},
the authors use a gradient based approach to obtain the optimal design.
In~\cite{tsilifis2017efficient}, the authors propose an alternate design
criterion given by a lower bound of the expected information gain, and use PC
representation of the forward model to accelerate objective function
evaluations. 
However, the approaches based on PC representations remain limited
in scope to problems with low to moderate parameter dimensions (e.g., parameter
dimensions in order of tens). 

Efficient estimators for the evaluation of D-optimal criterion were developed
in~\cite{saibaba2015fast,SaibabaAlexanderianIpsen17}; however, these works do
not discuss the problem of computing  D-optimal designs.  The mathematical
formulation of Bayesian D-optimality for infinite-dimensional Bayesian linear
inverse problems was established in~\cite{AlexanderianGloorGhattas16}.
A scalable solver for D-optimal designs for infinite-dimensional Bayesian
inverse problems, however, is currently not available to the best of our
knowledge. Here by scalable we mean
methods whose computational complexity, in terms of the number of required PDE
solves, do not scale with the dimension of the discretized inversion
parameter.


\paragraph{Contributions} We propose a general computational framework for
D-optimal experimental design for infinite-dimensional Bayesian linear inverse
problems governed by PDEs, with Gaussian prior and noise models.  Specifically, we propose
three approaches for approximately computing the D-optimal criterion,
its gradient, and the Kullback--Leibler (KL) divergence from the 
posterior to prior: 
(i) truncated spectral decomposition approach; (ii) randomized
approach; and (iii) an approach 
based on
fixed low-rank approximation of the prior-preconditioned forward operator.
We compare and
contrast the different approaches and perform a detailed error analysis for 
each case.
The effectiveness of the algorithms is
illustrated on a model problem involving initial state inversion in a
time-dependent advection-diffusion equation.  Besides the computational
framework, we show the relationship between the expected KL divergence and the
D-optimal design criterion; while this is addressed
in~\cite{AlexanderianGloorGhattas16}, in the infinite-dimensional Hilbert space
setting, we present a simple derivation within the context of the discretized
problem, which leads to the D-optimal criterion that is meaningful in the
infinite-dimensional limit.

\paragraph{Paper overview} In section~\ref{sec:background}, we review the
Bayesian formulation of the inverse problem in infinite dimensions, with
special attention to discretization and finite dimensional representation. We
also review our recent work~\cite{SaibabaAlexanderianIpsen17} on randomized
estimators for determinants. In section~\ref{sec:criterion}, we provide a
simple derivation of the Bayesian D-optimal criterion for infinite-dimensional
problems and present the formulation of the sensor placement problem as a
D-optimal design problem.  Section~\ref{sec:algorithms} is devoted to
developing efficient algorithms for computing the KL divergence from posterior to prior, and 
for evaluating the D-optimal objective function and its gradient; also, 
in that section, we formulate the optimization problem for finding D-optimal
sensor placements.
%
%
In section~\ref{sec:model}, we outline our model sensor placement problem for
initial state inversion in a time-dependent advection-diffusion equation.
Section~\ref{sec:numerics} contains our numerical results, where we illustrate
effectiveness of our methods. Finally, in section~\ref{sec:conclusions}, we
provide some concluding remarks.

%
%
\section{Background}\label{sec:background}
In this section, we provide the requisite background material required for the
formulation and numerical computation of optimal experimental designs for
infinite-dimensional Bayesian inverse problems. 

\subsection{Bayesian linear inverse problems}
Here, we describe the main ingredients of a Bayesian linear inverse problem,
and our assumptions about them.  The inference parameter is denoted by 
$\ipar$ and is an element of an infinite-dimensional real separable Hilbert space
$\hilb$.
We consider a Gaussian measure
$\priorm=\GM{\iparpr}{\Cprior}$ as the prior distribution law of $\ipar$. 
The prior mean $\iparpr$ is assumed to be a  sufficiently
regular element of $\hilb$, and $\Cprior:\hilb \to \hilb$ a
strictly positive self-adjoint trace-class operator.  Following
\cite{Bui-ThanhGhattasMartinEtAl13,Stuart10,DashtiStuart16}, we consider prior
covariance operators of the form $\C = \A^{-2}$, where $\A$ is a Laplacian-like
operator (in the sense defined in~\cite{Stuart10}).  This ensures that, in two
and three space dimensions, $\C$ is trace-class. 

We consider observations taken at $\Ns$ measurement points, which henceforth we refer to
as sensor locations, and at $\Nt$ discrete points in time. We denote the
vector of experimental (sensor) data by $\obs \in \R^\Nd$, where  $\Nd =
\Ns\Nt$. We assume an  additive Gaussian noise model,  
\begin{equation*}
\obs = F\ipar + \vec{\eta}, \quad \eeta \sim \GM{\vec{0}}{\ncov}.
\end{equation*} 
Here $\ncov\in \R^{\Nd\times \Nd}$ is the noise covariance
matrix, and $F:\hilb \to \R^\Nd$ is a linear parameter-to-observable map, whose evaluation
involves solving the governing PDEs, and applying an observation operator that
extracts solution values at the sensor locations and at observation times. Due
to our choice of the noise model, the likelihood probability density function
(pdf), $\like(\obs | \ipar)$, is given by  
\begin{equation*}
\label{equ:likelihood} \like(\obs \mid \ipar) \propto \exp\left\{ -\frac12
\big(F\ipar - \obs\big)\tran \ncov^{-1} \big(F\ipar - \obs\big)\right\}.
\end{equation*} 
The solution of a Bayesian inverse problem is the posterior
measure, $\postm$, which describes the distribution law of the inference
parameter $\ipar$ conditioned on the observed data.  The Bayes Theorem describes
the relation between the prior measure, the data likelihood, and the posterior
measure. In the infinite-dimensional Hilbert space settings, the Bayes Formula is
given by \cite{Stuart10}, 
\[    \frac{d\postm}{d\priorm} \propto \like(\obs \mid \ipar). \] 

Here, $\frac{d\postm}{d\priorm}$ is the Radon-Nikodym
derivative~\cite{Williams1991} of  $\postm$ with respect to the $\priorm$. In
the present work, the parameter-to-observable map is linear and is assumed to be 
continuous.
Under these assumptions on the parameter-to-observable map, the noise model, and the prior, it
is known~\cite[Section 6.4]{Stuart10} that the solution $\postm$ of the
Bayesian \emph{linear} inverse problem is a Gaussian measure $\postm =
\GM{\iparmap}{\Cpost}$ whose mean and covariance operator are given by
\begin{subequations}\label{equ:mean-cov} 
\begin{align}
\iparmap &=  \Cpost(F^*\ncov^{-1}\obs + \Cprior^{-1}\iparpr), \\
\Cpost &=  (F^* \ncov^{-1} F + \Cprior^{-1})^{-1}. 
\end{align}  
\end{subequations}
Note that in this case the posterior mean coincides with the MAP estimator.

\subsection{Discretization of the Bayesian inverse problems}

Following the setup
in~\cite{Bui-ThanhGhattasMartinEtAl13,AlexanderianPetraStadlerEtAl14}, we state
the finite-dimensional Bayesian inverse problem so that  it is consistent with
the underlying inference problem with $\hilb = L^2(\D)$ as its parameter space.  

\paragraph{Finite-dimensional Hilbert space setup}
We use a finite-element discretization of the Bayesian inverse problem, 
and hence work in the $n$-dimensional inner product space 
$\Rnm$, which is  
$\R^\Nm$ equipped the mass-weighted inner product, $\mip{\vec{x}}{\vec{y}} =
\ip{\M\vec{x}}{\vec{y}}$; here $\ip{\cdot}{\cdot}$ denotes the Euclidean inner
product, $\M$ is the finite-element mass
matrix, and $\Nm$ is the dimension of the discretized inversion parameter;
see~\cite{Bui-ThanhGhattasMartinEtAl13,AlexanderianPetraStadlerEtAl14}.

We denote by $\L(\Rnm)$, and $\L(\R^k)$ the spaces of linear operators on $\Rnm$, and $\R^k$, respectively. We also
define the spaces of the linear transformations from $\Rnm$ to $\R^k$, and from
$\R^k$ to $\Rnm$ by $\L(\Rnm,\R^k)$, and $\L(\R^k, \Rnm)$, respectively. Here
$\R^k$ is assumed to be equipped with the Euclidean inner product. 
%
%
Next, we give explicit formulas for the adjoints of elements of these spaces, which
will be useful in the rest of the paper: 
\begin{align}
   \text{for } \mat{A} \in \L(\Rnm),       & \quad \mat{A}^* = \M^{-1} \mat{A}\tran \M, \\
   \text{for } \mat{A} \in \L(\Rnm,\R^k), &\quad \mat{A}^* = \M^{-1} \mat{A}\tran, \text{ and}\\
   \text{for }  \mat{A} \in \L(\R^k,\Rnm), &\quad \mat{A}^* = \mat{A}\tran \M.
\end{align}
The superscript ${}\tran$ denotes matrix transpose. 
Also, we use the notations $\Lsym(\R^n)$ and $\Lsym(\Rnm)$ 
for the subspaces of self-adjoint operators in $\L(\R^n)$ and $\L(\Rnm)$, respectively. 
%

\begin{remark} \label{r_sym}
It is straightforward to note that for $\mat{A} \in \Lsym(\Rnm)$, 
$$\mat{B} = \M^{1/2} \mat{A} \M^{-1/2} \in \Lsym(\R^n).$$  
Thus, the  similarity transform $\mat{A} \mapsto \mat{M}^{1/2} \mat{A} \mat{M}^{-1/2}$ 
gives a matrix that is self-adjoint with respect to the Euclidean inner product; this is useful 
in practical computations.
\end{remark}

\paragraph{The discretized inverse problem} 
We denote by $\dpar \in \Rnm$ the discretized parameter that we wish to infer, and as before, let
$\obs \in \R^\Nd$ 
be a vector containing sensor measurements, i.e., the experimental data. 
As discussed in~\cite{Bui-ThanhGhattasMartinEtAl13},
the discretized inverse problem is stated on the space $\Rnm$. The prior 
density is given by
\begin{equation}
\label{eq:prior_pdf}
\pi_\text{prior}(\dpar) \:\propto\:
 \exp\left\{
- \frac 12 \left\| \mat{A} (\dpar - \dparpr) \right\|^2_{\M}
\right\}. 
\end{equation}
Following~\cite{Bui-ThanhGhattasMartinEtAl13}, we define $\mat{A} = \M^{-1}\mat{L}$, with 
\begin{equation}\label{equ:prior_sqrt}
   \mat{L} = \alpha \mat{K} + \beta \mat{M},
\end{equation}
where $\mat{K}$ is the finite-element stiffness matrix, and $\alpha$ and
$\beta$ are positive constants.
Note that $\mat{A} \in \Lsym(\Rnm)$ and $\priorcov = \mat{A}^{-2}$.
The posterior measure is also Gaussian with mean and covariance operator 
given by~\cite{Bui-ThanhGhattasMartinEtAl13,Stuart10},
\begin{subequations}\label{equ:discrete-mean-cov}
   \begin{align}
   \dpostmean &=  \postcov \left(\FF^*\ncov^{-1} \obs + \priorcov^{-1} \dparpr\right),\label{equ:discrete-mean} \\
   \postcov &= \left(\FF^*\ncov^{-1}\FF + \priorcov^{-1} \right)^{-1}, \label{equ:discrete-cov}
   \end{align}
\end{subequations}
where 
$\FF \in \L(\Rnm, \R^\Nd)$ is the discretized 
parameter-to-observable map (forward operator). Note that~\eqref{equ:discrete-mean-cov} 
is the discretized version of~\eqref{equ:mean-cov}.
%
%
%
As is well known, 
the posterior mean $\dpostmean$ is the unique global minimizer of
the regularized data misfit functional 
\[
\mathcal{J}(\dpar) :=  
\frac12 \ip{\FF\dpar - \obs}{\ncov^{-1} (\FF\dpar - \obs)} + \frac12 
\mip{\dpar - \dparpr}{\priorcov^{-1}(\dpar-\dparpr)}.
\]

The Hessian operator of this
functional  is $\Hd = \HMd + \priorcov^{-1}$, where $\HMd = \FF^*
\ncov^{-1} \FF$ is the data misfit Hessian~\cite{Tarantola05,Bui-ThanhGhattasMartinEtAl13}.  
Note also that
\[
\postcov = (\HMd + \priorcov^{-1})^{-1} = 
\priorcov^{1/2} (\priorcov^{1/2} \HMd \priorcov^{1/2} + \mat{I})^{-1} \priorcov^{1/2}.  
\] 
The operator 
\[
\HMpd = \priorcov^{1/2} \HMd \priorcov^{1/2} \in \Lsym(\Rnm)
\] 
is referred to as the prior-preconditioned data misfit Hessian~\cite{Bui-ThanhGhattasMartinEtAl13} and will 
be of relevance in the sequel. 
Following Remark~\ref{r_sym}, 
we also define  
\begin{equation}\label{equ:HMs}
\HMs = \mat{M}^{1/2} \HMpd \mat{M}^{-1/2},
\end{equation}
which is a symmetric matrix and is convenient to use in computations.
The rank of $\HMs$  is $\min\{\Nd,n\}$. For some
applications, even though the rank can be large, the eigenvalues of $\HMs$
decay rapidly.  This is further discussed in~\cite{FlathWilcoxAkcelikEtAl11}. 
We also define the prior-preconditioned forward operator 
\begin{equation}\label{eqn:ffp}
\FFp = \FF \priorcov^{1/2}.
\end{equation} 

\begin{remark}
Preconditioning the forward operator 
by the prior covariance operator is expected to lead to faster 
decay of singular values of the preconditioned operator, when using 
a smoothing $\priorcov$. 
This can be explained using the
multiplicative singular value inequalities~\cite[page~188]{Bhatia09},
\[
   \sigma_i(\FF \priorcov^{1/2}) \leq
   \sigma_1(\priorcov^{1/2})\sigma_i(\FF), 
   \quad \text{and} \quad 
   \sigma_i(\FF \priorcov^{1/2}) \leq \sigma_i(\priorcov^{1/2}) \sigma_1(\FF).
\]
The first inequality shows that preconditioning by $\priorcov^{1/2}$ does not
degrade the decay of singular values of $\FF$, and the second one shows that
a smoothing $\priorcov$ for which singular values $\sigma_i(\priorcov^{1/2})$ decay faster than that of 
$\FF$ can improve the spectral decay of $\FF$. The latter was observed numerically 
in many cases; see e.g.~\cite{FlathWilcoxAkcelikEtAl11,AlexanderianPetraStadlerEtAl14}.
\end{remark}

\begin{remark} The expression for $\HMs$ given above involves square root of
$\mat{M}$. While this can be computed for some problems, 
a numerically viable alternative is to
compute a Cholesky factorization $\mat{M} = \mat{L}\mat{L}\tran$, 
where $\mat{L}$ is lower triangular with positive diagonal entries.  
Then, it is
simple to show that the transformation $\mat{A} \mapsto \mat{L}\tran \mat{A}
\mat{L}^{-\transymb}$, maps elements of $\Lsym(\Rnm)$ to $\Lsym(\R^n)$. This
alternate formulation can be used to redefine $\HMs$ as an element of
$\Lsym(\R^n)$. Computing explicit factorizations may be prohibitively expensive
for large-scale problems in three-dimensional physical domains. In this case, the action of
$\mat{M}^{1/2}$ (and its inverse) on vectors can be computed using polynomial
approaches~\cite{chen2011computing,chow2014preconditioned} or rational
approaches~\cite{hale2008computing,bakhos2017multipreconditioned}.

\end{remark}

\subsection{Randomized log-determinant estimators}\label{ss_logdet} Here we
describe the randomized log-determinant estimator developed in our previous
work~\cite{SaibabaAlexanderianIpsen17}. These estimators will be used for 
efficient estimation of $\logdet (\mat{I} + \HMs)$. 
Recall that $\HMs$
is symmetric positive semidefinite and has rapidly decaying 
eigenvalues---a problem structure that is exploited by the randomized algorithms
discussed here.
Moreover, $\HMs$  is a dense operator that is too computationally expensive
to build elementwise, but matrix-vector products involving $\HMs$ can be
efficiently computed. To compute the log-determinant of $\mat{I} + \HMs$, we
first use randomized subspace iteration to estimate the dominant subspace.
Algorithm~\ref{alg:randsvd} is an idealized version of randomized subspace
iteration used for this purpose. As the starting guess, we sample random matrix
$\mat{\Omega}$ with $\ell\geq k$ columns, with i.i.d.\ entries from the
standard normal distribution. In practice, $\ell = k + p$, with $p$ a small 
(e.g., $p = 10$) oversampling parameter. 
We then apply  $q$ steps of the power iterations
to this random matrix using $\HMs$ to obtain $\mat{Y}$. A thin QR decomposition
of $\mat{Y}$ produces a matrix $\mat{Q}$ with orthonormal columns. The  output
of Algorithm~\ref{alg:randsvd} is the $\ell\times \ell$ restriction of $\HMs$
to $\text{Span}(\mat{Q})$, i.e., $\mat{T} = \mat{Q}\tran\HMs\mat{Q}$. In
practice, the number of power iterations is taken to be $q = 1$, or $q = 2$.
Specifically, for all the numerical tests in the present work, $q = 1$ was found
to be sufficient.

\begin{remark} Throughout this paper, we assume that the random starting guess
$\mat{\Omega}$
is a standard Gaussian random matrix. An extension to other random starting
guesses, such as Rademacher, can be readily derived, following the approach
in~\cite{SaibabaAlexanderianIpsen17}.  
\end{remark}

\begin{algorithm}[!ht]
\begin{algorithmic}[1]
\REQUIRE Hermitian positive semi-definite matrix $\HMs \in\mathbb{R}^{n\times n}$ with
target rank $k$,\\
$\qquad$ number of subspace iterations $q\geq 1$,\\
$\qquad$ starting guess 
$\mat{\Omega} \in \mathbb{R}^{n\times \ell}$ with $k\leq \ell \leq n$ columns.
\ENSURE Matrix $\mat{T}\in\mathbb{R}^{\ell\times \ell}$.
\STATE Multiply $\mat{Y} = (\HMs)^q\mat{\Omega}$.
\STATE  Thin QR factorization $\mat{Y}=\mat{Q}\mat{R}$.
\STATE Compute $\mat{T} = \mat{Q}\tran\HMs\mat{Q}$.
\end{algorithmic}
\caption{Randomized subspace iteration (idealized version)}
\label{alg:randsvd}
\end{algorithm}

With the output of Algorithm~\ref{alg:randsvd}, we can approximate the
objective function as 
\[    
\logdet(\mat{I} + \HMs) \approx
\logdet(\mat{I}+\mat{T}). 
\] 
We also mention that this algorithm can be used to compute an (approximate) 
low-rank approximation of $\HMs$ as follws:
compute the eigenvalue decomposition of
$\mat{T} = \mat{U}_T\mat{\Lambda}_T\mat{U}_T\tran$, and compute 
$\widehat{\mat{U}} =
\mat{QU}_T$. Then we have the low-rank approximation 
\[ 
   \HMs \approx \widehat{\mat{U}} \widehat{\mat{\Lambda}} \widehat{\mat{U}}\tran, \quad \text{with } 
   \widehat{\mat{\Lambda}} = \mat{\Lambda}_T.
\]

For the error analysis, we will need the following definitions. 
Since $\HMs$ is symmetric positive semidefinite, its eigenvalues are nonnegative 
and can be arranged in descending order. Consider the eigendecomposition
$\HMs = \mat{U}\mat{\Lambda}\mat{U}\tran$; let us suppose that  $\mat{\Lambda}_1 \in
\mathbb{R}^{k\times k}$ contains the dominant eigenvalues and $\mat{\Lambda}_2
\in \mathbb{R}^{(n-k)\times (n-k)}$, where $k$ is the target rank. Partition the
eigendecomposition as 
\begin{equation}\label{e_evd}  \HMs =
\bmat{\mat{U}_1 & \mat{U}_2} \bmat{\mat{\Lambda}_1 \\ &
\mat{\Lambda_2}}\bmat{\mat{U}_1\tran \\ \mat{U}_2\tran}. 
\end{equation} 
We assume that $\mat\Lambda_1$ is nonsingular, and that there is a gap between
the eigenvalues $\lambda_k$ and $\lambda_{k+1}$. The size of the gap is
inversely proportional to $$\gamma\equiv \lambda_{k+1}/\lambda_k
=\|\mat{\Lambda}_2\|_2\,\|\mat{\Lambda}_1^{-1}\|_2<1.$$
As noted before, the rank of $\HMs$, which we denote by $K$, satisfies
$K =  \min\{\Nd,n\}$. 

%
%
\section{D-optimal design of experiments for PDE based inverse problems}
\label{sec:criterion}
In this section we examine the information theoretic connections
of the D-optimality criterion, and formulate the sensor placement problem as
an OED problem. 



\subsection{Kullback-Leibler divergence} The Kullback-Liebler (KL) divergence~\cite{KullbackLeibler51}
is a ``measure'' of how one probability distribution deviates from another.
It is important to note that it is not a true metric on the set of
probability measures, since it is not symmetric and does not satisfy the
triangle inequality~\cite{sullivan2015introduction}. Despite these shortcomings, the KL divergence is widely
used since it has many favorable properties. In the context of Bayesian
inference, the KL divergence measures the information gain between the prior
and the posterior distributions. 
We motivate the discussion by
recalling the form of the KL divergence from posterior to prior
distributions in the finite-dimensional case.  To avoid introducing new notation, we continue to denote the
(finite-dimensional) posterior and prior measures by 
\begin{equation}\label{eqn:finitemeasure}
\priorm = \GM{\vec{0}}{\priorcov}, \qquad \postm = \GM{\dpostmean}{\postcov},
\end{equation} respectively.  
Here, for
convenience, and without loss of generality, we assumed the prior mean is
zero.  The following expression for $\DKL{\postm}{\priorm}$ is well
known~\cite[Exercise 5.2]{sullivan2015introduction}. 
\begin{multline}\label{equ:DKL-fd} \DKL{\postm}{\priorm} =
\frac{1}{2}\,\left[ -\log\left( \frac{\det\postcov}{\det \priorcov}\right) - n
+ \trace(\priorcov^{-1} \postcov) \right. \\ \left. +
\cipfd{\dpostmean}{\dpostmean} \right], \end{multline} where we have used the
notation, \[ \cipfd{\vec{x}}{\vec{y}} =
\mip{\priorcov^{-1/2}\vec{x}}{\priorcov^{-1/2}\vec{y}}, \quad \vec{x}, \vec{y}
\in \Rnm.  \] 
We note that the expression~\eqref{equ:DKL-fd} is not meaningful
in the infinite-dimensional case, and it is not
immediately clear what the infinite-dimensional limit will be.
In~\cite{AlexanderianGloorGhattas16}, this issue is investigated in detail in
the infinite-dimensional Hilbert space setting. 
Here we provide a derivation of an expression for the KL divergence, within the context of 
the discretized problem, that
remains meaningful in infinite dimensions.
The $\logdet$ term in~\eqref{equ:DKL-fd}
may be simplified as: 
\begin{align}\label{equ:detterm} 
-\log\left(\frac{\det\postcov}{\det \priorcov}\right)   
&= \log \det \left( \priorcov^{1/2} (\HMd + \priorcov^{-1}) \priorcov^{1/2}
\right) \\ 
&= \log \det (\HMpd + \mat{I}). \nonumber 
\end{align} Since
$\HMpd$ is the discretization of a trace-class operator, 
$\det (\HMpd + \mat{I})$ is the Fredholm determinant, and is well-defined.  Next, we consider
the term $-n+\trace(\priorcov^{-1} \postcov)$. Since $n = \trace(\mat{I})$ and
the trace operator is linear, 
\begin{align} \nonumber -n+\trace(\priorcov^{-1}
\postcov) = & \> \trace\big( (\priorcov^{-1} - \postcov^{-1})\postcov\big) \\
\label{equ:traceterm} = & \> -\trace(\HMd \postcov) = -\trace(\HMpd(\mat{I} +
\HMpd)^{-1}), 
\end{align}
%
%
where in the penultimate step we used the relation $\postcov^{-1} = \HMd +
\priorcov^{-1}$.  Notice that the argument of the trace in the final expression
is in fact a trace-class operator in the infinite-dimensional case and has a
well-defined trace.  Combining~\eqref{equ:detterm} and~\eqref{equ:traceterm} we
rewrite~\eqref{equ:DKL-fd} as follows: 
\begin{equation}\label{equ:DKL-fd-alt}
\DKL{\postm}{\priorm} =  \frac{1}{2}\Big[ \log\det (\HMpd + \mat{I}) -
   \trace(\HMpd(\mat{I} + \HMpd)^{-1}) + \cipfd{\dpostmean}{\dpostmean} \Big].
\end{equation} 
While this equation is equivalent to~\eqref{equ:DKL-fd} in
finite dimensions, it will be used henceforth, because it is well defined in
the infinite dimensional limit.  

\subsection{Expected information gain and the D-optimal criterion} 


The following result provides an explicit expression for the expected
information gain leading to the D-optimal criterion.
\begin{theorem}\label{thm:doptimal_criterion}
Let $\priorm = \GM{\vec{0}}{\priorcov}$ and $\postm = \GM{\dpostmean}{\postcov}$ 
be the prior and posterior measures corresponding to a Bayesian
linear inverse problem with additive Gaussian noise model. Then,
\begin{equation}\label{equ:EKLD}
\avep{\avey{\DKL{\postm}{\priorm}}} = \displaystyle\frac12 \log\det (\HMpd + \mat{I}). 
\end{equation}
\end{theorem}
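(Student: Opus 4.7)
The plan is to start from the identity \eqref{equ:DKL-fd-alt} for $\DKL{\postm}{\priorm}$ and compute the iterated expectation term by term. Of the three summands inside the bracket, $\log\det(\HMpd+\mat{I})$ and $\trace(\HMpd(\mat{I}+\HMpd)^{-1})$ depend only on $\FF$, $\priorcov$, and $\ncov$, and are therefore deterministic with respect to both $\ipar$ and $\obs$; they pass through $\avep{\,\cdot\,}$ and $\avey{\,\cdot\,}$ unchanged. The entire burden of the proof thus reduces to showing
\begin{equation*}
\avep{\avey{\cipfd{\dpostmean}{\dpostmean}}}
\;=\;\trace\!\bigl(\HMpd(\mat{I}+\HMpd)^{-1}\bigr),
\end{equation*}
whereupon this third term cancels the trace in \eqref{equ:DKL-fd-alt} and leaves exactly $\tfrac12\log\det(\HMpd+\mat{I})$.

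To establish the displayed identity, I would first use $\dparpr=\vec{0}$ (assumed in \eqref{eqn:finitemeasure}) to write $\dpostmean=\postcov\FF^{*}\ncov^{-1}\obs$, so that $\dpostmean$ is a deterministic linear image of $\obs$. Under the joint law prescribed by $\dpar\sim\priorm$ and $\obs\mid\dpar\sim\GM{\FF\dpar}{\ncov}$, the marginal law of $\obs$ is Gaussian with mean $\vec{0}$ and covariance $\FF\priorcov\FF^{*}+\ncov$. Consequently $\dpostmean$ is mean-zero with covariance operator
\begin{equation*}
\mat{\Sigma} \;=\; \postcov\,\FF^{*}\ncov^{-1}\bigl(\FF\priorcov\FF^{*}+\ncov\bigr)\ncov^{-1}\FF\,\postcov
\;=\;\postcov\,\HMd\,(\priorcov\,\HMd+\mat{I})\,\postcov,
\end{equation*}
using $\FF^{*}\ncov^{-1}\FF=\HMd$. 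Factoring $\priorcov\HMd+\mat{I}=\priorcov(\HMd+\priorcov^{-1})=\priorcov\,\postcov^{-1}$ then collapses the expression to $\mat{\Sigma}=\postcov\,\HMd\,\priorcov$.

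Next I would invoke the standard trace identity for quadratic forms in the $\M$-weighted inner product: for a mean-zero random vector $\vec{v}\in\Rnm$ with covariance operator $\mat{\Sigma}\in\L(\Rnm)$ and any $\mat{T}\in\Lsym(\Rnm)$, $\ave\,\mip{\mat{T}\vec{v}}{\vec{v}}=\trace(\mat{T}\mat{\Sigma})$. Applying this with $\mat{T}=\priorcov^{-1}$ and $\vec{v}=\dpostmean$ gives
\begin{equation*}
\avep{\avey{\cipfd{\dpostmean}{\dpostmean}}}
\;=\;\trace\!\bigl(\priorcov^{-1}\postcov\,\HMd\,\priorcov\bigr)
\;=\;\trace(\postcov\,\HMd)
\end{equation*}
by the cyclic property of the trace. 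Finally, substituting $\postcov=\priorcov^{1/2}(\HMpd+\mat{I})^{-1}\priorcov^{1/2}$ and using cyclicity once more,
\begin{equation*}
\trace(\postcov\,\HMd)
\;=\;\trace\!\bigl((\HMpd+\mat{I})^{-1}\,\priorcov^{1/2}\HMd\,\priorcov^{1/2}\bigr)
\;=\;\trace\!\bigl(\HMpd(\HMpd+\mat{I})^{-1}\bigr),
\end{equation*}
which is precisely the cancellation required.

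The main obstacle is bookkeeping rather than any deep technical point: one must handle the $\M$-weighted adjoints carefully so that the ``covariance equals $\ave[\vec{v}\vec{v}^{*}]$'' identity and the cyclic property of the trace both apply in $\Lsym(\Rnm)$, and one must correctly identify the marginal law of $\obs$ under the prior-predictive distribution. Once the algebra $\priorcov\HMd+\mat{I}=\priorcov\,\postcov^{-1}$ is used to collapse $\mat{\Sigma}$ to $\postcov\HMd\priorcov$, the remaining manipulations are trace-cyclic and the proof closes.
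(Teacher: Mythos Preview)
Your proof is correct, and it reaches the same cancellation as the paper, but by a different and somewhat more economical route. The paper evaluates the iterated expectation literally: it first computes $\avey{\cipfd{\dpostmean}{\dpostmean}}$ using the conditional law $\obs\mid\dpar\sim\GM{\FF\dpar}{\ncov}$, which by the quadratic-form formula splits into a trace term $\trace(\S^2\HMpd)$ (independent of $\dpar$) plus a quadratic form in $\FF\dpar$; it then applies the quadratic-form formula a second time to average the latter over $\dpar\sim\priorm$, obtaining $\trace(\S^2\HMpd^2)$, and finally collapses the sum $\trace(\S^2\HMpd)+\trace(\S^2\HMpd^2)=\trace(\S\HMpd)$.

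You instead observe that $\dpostmean$ depends on the randomness only through $\obs$, so by the tower property the iterated expectation equals a single expectation under the prior-predictive marginal $\obs\sim\GM{\vec{0}}{\FF\priorcov\FF^{*}+\ncov}$. This lets you apply the quadratic-form identity once, after which the algebraic simplification $\priorcov\HMd+\mat{I}=\priorcov\,\postcov^{-1}$ collapses the covariance of $\dpostmean$ to $\postcov\HMd\priorcov$ directly. Your approach trades two applications of the quadratic-form formula for one, at the cost of identifying the marginal law of $\obs$; the paper's approach is more mechanical but avoids invoking the marginal explicitly. Both are perfectly valid, and the bookkeeping caveat you flag about $\M$-weighted traces and adjoints applies equally to either.
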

\begin{proof} See Appendix~\ref{ssec:doptproof}.
\end{proof}

Several important observations are worth mentioning here. First, 
this theorem provides the expression for 
the D-optimal criterion in terms of 
the expected KL divergence from the posterior to the prior
distribution; this 
expression remains meaningful in the infinite-dimensional setting. 
Moreover,~\eqref{equ:EKLD} reveals an important problem structure in the D-optimal
criterion: the
operator $\HMpd$ admits a low-rank representation in a wide class of inverse
problems. This low-rank structure is exploited in our algorithms for numerical
solution of the D-optimal experimental design problem.
Finally, the KL divergence~\eqref{equ:DKL-fd-alt} depends on the data, through the term $\dpostmean$.
For the linear-Gaussian problem, which we consider in this paper, the
D-optimality criterion is independent of the data since the expectation of the
KL divergence is taken over the data and the prior. 

In the context of this paper, an optimal experimental design is one that
maximizes the D-optimal criterion, or alternatively, maximizes the expected
information gain. Moreover, in practical computations, we can use $\HMs$ in replace of
$\HMpd$ in the D-optimal criterion~\eqref{equ:EKLD} and the expression for the KL
divergence~\eqref{equ:DKL-fd-alt}.

\subsection{The D-optimal criterion for sensor placement problems}

For the sensor placement problem, we consider the following
setup~\cite{Ucinski05, HaberHoreshTenorio08,HaberMagnantLuceroEtAl12,
AlexanderianPetraStadlerEtAl14, AlexanderianPetraStadlerEtAl16}. A set of
points $\vec{x}_i$, $i = 1, \ldots, \Ns$, representing the candidate sensor
locations, is identified. Each sensor location is associated with a
non-negative weight $w_i \in \R$ encoding its relative importance.
The experimental design is then fully specified by the vector
$\vec{w} = [w_1, w_2, \ldots, w_\Ns]\tran$.

For sensor placemement, binary weight vectors are desired.  If $w_i$ equals
one, then a sensor will be placed at $\vec{x}_i$; otherwise, no sensors will be
placed at that location.  In this context, nonbinary weights are difficult to
interpret and implement. From a computational standpoint, however, the
experimental design problem with binary weights is a combinatorial problem and
intractable even for a modest number of sensors. Therefore, as
in~\cite{AlexanderianPetraStadlerEtAl14,AlexanderianPetraStadlerEtAl16}, we
consider a relaxation of the problem with weights $w_i \in [0,1]$, $i  = 1,
\ldots, \Ns$. Subsequently, binary weights are obtained using sparsifying
penalty functions; see section~\ref{ssec:dopt}. 

We next describe how the design $\vec{w}$ is incorporated in the 
D-optimal criterion. 
The weight vector $\vec{w}$ enters the Bayesian inverse problem through the data likelihood.
Subsequently, following the setup in~\cite{AlexanderianPetraStadlerEtAl14}, 
the data misfit Hessian takes the form,
$\HMd = \FF^* \mat{W}^{1/2}\ncov^{-1}\mat{W}^{1/2} \FF$, where 
$\mat{W}$ is an $\Ns\Nt \times \Ns\Nt$ diagonal matrix given by
\begin{equation}\label{equ:W} \mat{W} = \sum_{j = 1}^\Ns w_j \mat{E}_j, \qquad
	\mat{E}_j = \mat{I}_\Nt \otimes \vec{e}_j \vec{e}_j^T, 
\end{equation}
with $\vec{e}_j$ the $j$th unit vector in $\R^\Ns$.
We consider the case of uncorrelated observations\footnote{This assumption is not crucial to our 
formulation and we can extend it to the case when the data are correlated. }, and hence, $\ncov$ is a
diagonal matrix, $\ncov = \diag(\sigma^2_1, \sigma^2_2, \ldots, \sigma^2_\Ns)$,
where $\sigma^2_j$ indicates the noise level at each sensor. For convenience,
we introduce the notation, $\mat{W}^\sigma = \sum_{j = 1}^\Ns w_j
\mat{E}^\sigma_j, \quad \mat{E}_j^\sigma =   \sigma_j^{-2} \mat{E}_j$. 
In this case, we have, $\HMd = \HMd(\vec{w}) = \FF^*
\mat{W}^\sigma\FF$.  

With the above definitions in place, 
the D-optimal objective becomes 
\begin{equation}\label{equ:Jw}
J(\vec{w}) = \log\det ( \mat{I} + \HMs(\vec{w})), 
\end{equation}
with $\HMs(\vec{w})$ defined according to~\eqref{equ:HMs},
$\HMs(\vec{w}) = \mat{M}^{1/2} \priorcov^{1/2} \HMd(\vec{w}) \priorcov^{1/2} \mat{M}^{-1/2}$.
Using the expression for derivative of 
log-determinant of matrix-valued 
functions~\cite[Theorem B.17]{Ucinski05},
and letting
$\partial_j$ be shorthand for $\frac{\partial}{\partial w_j}$, the derivative of the above function is given by,
\begin{equation}\label{e_partial_J2} 
	\partial_j J(\vec{w}) = \trace( (\mat{I} +
	\HMs(\vec{w}))^{-1} \mat{Z}_j),  
\end{equation} 
where the matrix $\mat{Z}_j$ is 
\begin{equation}
\label{eqn:Zj} \mat{Z}_j \equiv \partial_j \HMs(\vec{w}) =  \mat{M}^{1/2}\FFp^* \mat{E}^\sigma_j \FFp \mat{M}^{-1/2}. 
\end{equation} 
The matrix $\FFp$ was defined in~\eqref{eqn:ffp}. 
The last expression follows since $\partial_j\mat{W}^\sigma = \mat{E}_j^\sigma$.
 

%
%
\section{Numerical algorithms}\label{sec:algorithms}
In this section, we present our proposed numerical algorithms for efficient
computation of KL divergence (section~\ref{ss_klde}), and
the D-optimal criterion and its derivative (section~\ref{ss_obj_grad_all}).
The optimization problem for D-optimal sensor placements is formulated in
section~\ref{ssec:dopt}. 
 

\subsection{Fast estimation of KL divergence}\label{ss_klde}
To estimate the KL-divergence, we propose two methods---based on the the
truncated spectral representation of $\HMs$ or using a randomized estimator.

\paragraph{Truncated spectral decomposition} We can estimate the KL
divergence,  using only the first $k$ exact eigenvalues of $\HMs$. 
Let $\mat{\Lambda}_1$ and $\mat{\Lambda_2}$ be  
as in~\eqref{e_evd}.
We propose
the following estimator:
\begin{equation}\label{equ:DKL-fd-approxe}
  \DKLa{\postm}{\priorm} =  
  \frac{1}{2}\Big[ \log\det (\mat{I} + \mat{\Lambda}_1) - \trace(\mat{\Lambda}_1(\mat{I} +\mat{\Lambda}_1)^{-1}) + 
             \cipfd{\dpostmean}{\dpostmean} 
             \Big].
\end{equation}
Since the posterior is Gaussian, the MAP estimator coincides with the
conditional mean, and $\dpostmean$ is obtained by
solving~\eqref{equ:discrete-mean}.
Therefore,
$\cipfd{\dpostmean}{\dpostmean}$ is assumed to be available and need not be
estimated using the eigenvalue decomposition.

The error in the KL-divergence,  defined as follows 
\begin{equation}
	E_\text{KL} \equiv |\DKL{\postm}{\priorm} - \DKLa{\postm}{\priorm}|,
\end{equation} 
can be bounded as 
\begin{equation}\label{e_klerror} 
E_\text{KL}  \leq \frac12 \Big[   \logdet(\mat{I}+\mat{\Lambda}_2) + \trace(\mat{\Lambda}_2)\Big] .
\end{equation}
The proof is a straightforward consequence of the properties of the trace and the determinant and is omitted.

\paragraph{Randomized approach} Computing the exact eigenvalues of $\HMs$ can be computationally challenging. Based on the approach in section~\ref{ss_logdet}, we propose a randomized estimator for the KL divergence, which is cheaper to compute.  
Given the output $\mat{T}$ of Algorithm~\ref{alg:randsvd}, our estimator for the KL divergence is  
\begin{equation}\label{equ:DKL-fd-approx}
  \DKLa{\postm}{\priorm} =  
  \frac{1}{2}\Big[ \log\det (\mat{I} + \mat{T}) 
	  - \trace(\mat{T}(\mat{I} +\mat{T})^{-1}) + \cipfd{\dpostmean}{\dpostmean} 
             \Big].
\end{equation}
 It is clear that we only 
need to compute the determinant and the trace of an $\ell \times \ell$ matrix
instead of an $n\times n$ matrix. This has significant computational benefits when $\ell \ll n$. 
Theorem~\ref{p_kld} presents the error of the KL decomposition with the
expectation taken over the random starting guess $\mat\Omega$. A similar result
can be derived for concentration, or tail bounds of the distribution, but is
omitted.
\begin{theorem}\label{p_kld}
Let $\mat{T}$ be computed using Algorithm~\ref{alg:randsvd} with Gaussian starting guess $\mat\Omega \in \mathbb{R}^{n\times (k+p)}$ with $p \geq 2$ and $\DKLa{\postm}{\priorm}$ be computed using~\eqref{equ:DKL-fd-approx}. Then, we have the following estimate for the expected value of the error  
\begin{align} \expect[\mat\Omega]{E_\text{KL}} \leq & \>  \frac12 \left[ (1+ \gamma^{2q-1}C_{\rm ge})\trace(\mat{\Lambda}_2) + \right.\\ \nonumber
	& \qquad \left. \logdet(\mat{I}+\mat{\Lambda}_2) + \logdet(\mat{I}+\gamma^{2q-1}C_{\rm ge}\mat{\Lambda}_2)  \right], 
\end{align}
and with $\mu = \sqrt{n-k} + \sqrt{k+p}$, the constant $C_{\rm ge}$ is given by 
 $$C_{ge} \equiv \frac{e^2\,(k+p)}{(p+1)^2}\>
 \left(\frac{1}{2\pi(p+1)}\right)^{\frac{2}{p+1}}\>
 \left(\mu+\sqrt{2} \right)^2 \left(\frac{p+1}{p-1}\right). $$
\end{theorem}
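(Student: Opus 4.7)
Since the data-dependent term $\cipfd{\dpostmean}{\dpostmean}$ appears identically in~\eqref{equ:DKL-fd-alt} and~\eqref{equ:DKL-fd-approx}, it cancels in $E_\text{KL}$. By Remark~\ref{r_sym} the operator $\HMpd$ is similar to $\HMs$, so both $\logdet(\mat{I}+\HMpd)$ and $\trace(\HMpd(\mat{I}+\HMpd)^{-1})$ are spectral sums in the eigenvalues of $\HMs$. The triangle inequality then shows it suffices to bound, in expectation, the quantity
\begin{equation*}
|\logdet(\mat{I}+\HMs) - \logdet(\mat{I}+\mat{T})| \;+\; |\trace(f(\HMs)) - \trace(f(\mat{T}))|,
\end{equation*}
where $f(x) = x/(1+x)$, and then insert the overall factor of $1/2$.

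For the log-determinant piece I would invoke the analysis already developed in~\cite{SaibabaAlexanderianIpsen17} for Algorithm~\ref{alg:randsvd}. That argument uses Cauchy interlacing ($\lambda_i(\mat{T}) \le \lambda_i(\HMs)$) to peel off the inevitable tail contribution $\logdet(\mat{I}+\mat{\Lambda}_2)$, and controls the approximation error on the dominant subspace through the standard expected Frobenius-norm bound for randomized subspace iteration with a Gaussian starting guess and $p \ge 2$ oversampling. That bound produces the constant $C_{ge}$ and the gap factor $\gamma^{2q-1}$, and after applying the concavity of $x\mapsto \log(1+x)$, yields the $\logdet(\mat{I}+\gamma^{2q-1}C_{ge}\mat{\Lambda}_2)$ piece of the final estimate.

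The trace term is the main new ingredient. I would exploit that $f(x) = x/(1+x)$ is operator-monotone, concave, and $1$-Lipschitz on $[0,\infty)$ with $0 \le f(x) \le x$. Splitting the trace difference as
\begin{equation*}
\trace(f(\HMs)) - \trace(f(\mat{T})) = \sum_{i=1}^{k}\bigl[f(\lambda_i(\HMs)) - f(\lambda_i(\mat{T}))\bigr] + \sum_{i=k+1}^{n} f(\lambda_i(\HMs)) - \sum_{i=k+1}^{\ell} f(\lambda_i(\mat{T})),
\end{equation*}
the tail of $\HMs$ is dominated via $f(x)\le x$ to give $\trace(\mat{\Lambda}_2)$; the Ritz values $\lambda_i(\mat{T})$ for $i>k$ are nonnegative and can be discarded; and on the captured subspace the $1$-Lipschitz property reduces each summand to $\lambda_i(\HMs)-\lambda_i(\mat{T})$, which the same randomized subspace iteration bound used above controls in expectation by $\gamma^{2q-1}C_{ge}\,\trace(\mat{\Lambda}_2)$. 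Adding the two estimates and dividing by $2$ produces exactly the bound in the theorem.

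The delicate point will be ensuring that the low-rank approximation error $\trace(\HMs - \widehat{\mat{U}}\widehat{\mat{\Lambda}}\widehat{\mat{U}}\tran)$ under expectation inherits the same constant $C_{ge}$ and gap factor $\gamma^{2q-1}$ that govern the log-determinant analysis. This is a trace-norm versus Frobenius-norm bookkeeping step that follows from the structural results for randomized subspace iteration in~\cite{SaibabaAlexanderianIpsen17}, but it must be executed carefully so that no spurious constants appear; once in hand, the $1$-Lipschitz property of $f$ makes the remainder essentially mechanical.
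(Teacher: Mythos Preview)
Your proposal is correct and follows essentially the same route as the paper: cancel the data term, split via the triangle inequality into a $\logdet$ piece and a $\trace f(\cdot)$ piece with $f(x)=x/(1+x)$, invoke \cite[Theorem~1]{SaibabaAlexanderianIpsen17} for the $\logdet$ term, and reduce the trace term to $\trace(\HMs)-\trace(\mat{T})$ before applying the same reference again. The only difference is cosmetic: the paper splits the trace sum at $\ell$ rather than $k$ and uses the elementary algebraic inequalities $\tfrac{\lambda_i-\tilde\lambda_i}{(1+\lambda_i)(1+\tilde\lambda_i)}\le \lambda_i-\tilde\lambda_i$ and $\tfrac{\lambda_i}{1+\lambda_i}\le\lambda_i$ directly, arriving at $\trace(\HMs)-\trace(\mat{T})$ in one line; it then cites \cite[Theorem~1]{SaibabaAlexanderianIpsen17} for the bound $(1+\gamma^{2q-1}C_{\rm ge})\trace(\mat\Lambda_2)$ on that quantity as a whole. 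In particular, the ``delicate point'' you flag in your last paragraph is not delicate at all---the trace bound is stated in the reference alongside the $\logdet$ bound, so no separate trace-norm versus Frobenius-norm bookkeeping is needed.
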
 
\begin{proof}
See Appendix~\ref{ssec:kld}.
\end{proof}

The result can be further simplified with the observation that for $x \geq 0$, $\log(1+x) \leq x$. Therefore, the upper bound in Theorem~\ref{p_kld} can be simplified to
\[ \expect[\mat\Omega]{E_\text{KL}} \leq \> (1+ \gamma^{2q-1}C_{\rm ge})\trace(\mat{\Lambda}_2).\]
The importance of this result is that the expected error incurred in the KL divergence is no worse than the error in the approximation of $\trace(\HMs)$.  
However, it is worth comparing the result of~\eqref{e_klerror} with
Theorem~\ref{p_kld}. The error in Theorem~\ref{p_kld} is higher; however, the
estimator~\eqref{equ:DKL-fd-approx} is much easier to compute.

\subsection{Efficient computation of OED objective and gradient}\label{ss_obj_grad_all}

To efficiently estimate the OED objective function and gradient, we propose three
different methods: truncated spectral decomposition approach
(section~\ref{ss_obj_grad}); randomized approach (section~\ref{ss_rand_approach}); and
fixed (frozen) low-rank approximation of $\FFp$ (section~\ref{ss_frozen}); then, in
section~\ref{ssec:precomp}, we discuss the computational costs and pros and
cons of each approach. 


\subsubsection{Spectral decomposition approach} \label{ss_obj_grad}

We show how to compute the objective function $J(\vec{w})$ and its derivatives $\partial_j J(\vec{w})$ using the dominant eigenpairs of $\HMs$. 
 Using the spectral decomposition of $\HMs$ in section~\ref{ss_logdet}, we can compute
\[
J(\vec{w}) = \logdet(\mat{I} + \HMs) = \sum_{i = 1}^K \log(1 + \lambda_i).
\]

By Sherman-Morrison-Woodbury formula, we have $(\mat{I} + \HMs)^{-1} = \mat{I} - \mat{UDU}\tran$, 
where $$\mat{D} = \diag(\lambda_1/(1+\lambda_1), \ldots, \lambda_K/(1+\lambda_K)).$$
Substituting this expression into the gradient~\eqref{e_partial_J2}, we have, for $j=1,\dots,\Ns$
 \begin{align} \nonumber \partial_j
J(\vec{w}) &=  \trace( (\mat{I} - \mat{UDU}\tran) \mat{Z}_j) \\
           &= \trace(\mat{Z}_j) -
              \sum_{i=1}^{K} \left[ \frac{\lambda_i}{1+\lambda_i}\left( \vec{u}_i\tran\mat{Z}_j\vec{u}_i\right) \right], 
\end{align} 
where $\mat{Z}_j$ is as in~\eqref{eqn:Zj}. Recall that $\FFp^* = \mat{M}^{-1}\FFp\tran$, therefore, 
\[ \vec{u}_i\tran \mat{Z}_j\vec{u}_i =  \ip{\vec{q}_i}{\mat{E}_j^\sigma\vec{q}_i} \qquad i=1,\dots,K,\]
where $\vec{q}_i = \FFp \mat{M}^{-1/2}\vec{u}_i$. Moreover, we denote 
\begin{equation}\label{eqn:zj}
z_j \equiv \trace(\mat{Z}_j) \qquad j=1,\dots,\Ns;
\end{equation} 
the constants $\{z_j\}_{j=1}^\Ns$ are independent of the weights and can be precomputed; 
see section~\ref{ssec:precomp} for more details. 
To summarize, the gradient can be computed as 
\begin{equation}\label{equ:partialj} 
\partial_j J(\vec{w}) = 
z_j - \sum_{i=1}^K \frac{\lambda_i}{1+\lambda_i} \ip{\vec{q}_i}{\mat{E}_j^\sigma\vec{q}_i},  
\quad j=1,\dots,\Ns.
\end{equation}
%

The estimators for the D-optimal objective and its gradient, denoted,
respectively, by $\widehat{J}_\text{eig}(\vec{w})$ and $\widehat{\partial
J}_\text{eig}(\vec{w})$ only retain $k \leq K$ eigenpairs\footnote{Here and henceforth, we 
denote the approximate quantities with a $\,\widehat{ }\,$ symbol.}. In what follows, we refer to this 
as the Eig-$k$ approach, where $k$ is the target rank of the approximation.  
The steps for the computation of the D-optimal objective and gradient using 
the Eig-$k$ approach are detailed in Algorithm~\ref{alg:lowrank}.
\newcommand{\Evaluate}{\textbf{Evaluate }}
\newcommand{\Compute}{\textbf{Compute }}
\newcommand{\Solve}{\textbf{Solve }}
\begin{algorithm}[ht]
\caption{Eig-$k$: Truncated spectral decomposition for approximating ${J}(\vec{w})$ and $\nabla {J}(\vec{w})$.} 
\begin{algorithmic}[1]\label{alg:lowrank}
\REQUIRE Constants $\{z_j\}_{j=1}^\Ns$, target rank $k$, and design $\vec{w}$. 
\ENSURE  OED objective $\widehat{J}_\text{eig}(\vec{w})$ and gradient $\widehat{\nabla J}_\text{eig}(\vec{w})$.
\STATE \Solve an eigenvalue problem for eigenpairs $\{(\lambda_i, \vec{u}_i)\}_{i=1}^k$ of $\HMs(\vec{w})$. 
\STATE \Evaluate $\widehat{J}_\text{eig}(\vec{w}) = \sum_{i = 1}^k \log(1 + \lambda_i)$. 
\FOR{$i=1$ to $k$}
   \STATE \Compute $\vec{q}_i = \FFp \mat{M}^{-1/2}\vec{u}_i $
\ENDFOR
\STATE 
$\widehat{\partial_j J}_\text{eig}(\vec{w}) = z_j - \sum_{i = 1}^k \frac{\lambda_i}{1+\lambda_i}
\ip{\vec{q}_i}{\mat{E}^\sigma_j\vec{q}_i}, \quad j  = 1, \ldots, \Ns$.
\hfill
\end{algorithmic}
\end{algorithm}


\paragraph{Error analysis} It should be clear that if $k = K$, the objective
function and gradient evaluation is exact. For the case of $k < K$, we record
the following result summarizing the approximation errors in objective function
and gradient.
\begin{theorem}\label{prop:err_exactk}
Let $\widehat{J}_\text{eig}$ and $\widehat{\nabla J}_\text{eig}(\vec{w})$ respectively be the approximation to the OED objective and gradient computed by Algorithm~\ref{alg:lowrank}, with a rank $k$ truncation.
Then, for every $\vec{w} \in \R^\Ns_+$, with $\mat{Z}_j$ defined in~\eqref{eqn:Zj}, and  {for}  $j = 1, \ldots, \Ns$
\begin{align}
|J(\vec{w}) - \widehat{J}_\text{eig}(\vec{w})| = & \>   \log \det(\mat{I} + \mat{\Lambda_2}(\vec{w})) \leq 
\sum_{i = k+1}^K {\lambda_i} \\ 
|\partial_j J(\vec{w}) - \widehat{\partial_j J}_\text{eig}(\vec{w}) | \leq & \> \|\mat{Z}_j\|_2 \sum_{i = k+1}^K \frac{\lambda_i}{1+\lambda_i}.
\end{align}
Here $\lambda_i$ are eigenvalues of $\HMs$ and $\mat\Lambda_2$ is defined in~\eqref{e_evd}. 

\end{theorem}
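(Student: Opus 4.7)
The proof is essentially a direct computation that relies on the spectral representation of $\HMs$ and elementary inequalities, so the plan is to treat the objective and gradient errors separately.

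For the objective, the plan is to expand both quantities in terms of the eigenvalues $\{\lambda_i\}_{i=1}^K$ of $\HMs(\vec{w})$. The exact value is $J(\vec{w}) = \sum_{i=1}^{K}\log(1+\lambda_i)$, while the Eig-$k$ estimator retains only the $k$ dominant terms, so the difference is exactly $\sum_{i=k+1}^{K}\log(1+\lambda_i) = \logdet(\mat{I}+\mat{\Lambda}_2(\vec{w}))$, which gives the equality in the statement. The inequality then follows from the elementary bound $\log(1+x) \leq x$ for $x \geq 0$ applied termwise.

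For the gradient, I would start from the expression~\eqref{equ:partialj}, namely $\partial_j J(\vec{w}) = z_j - \sum_{i=1}^{K} \tfrac{\lambda_i}{1+\lambda_i}\,\vec{u}_i\tran\mat{Z}_j\vec{u}_i$, which follows from the Sherman--Morrison--Woodbury identity $(\mat{I}+\HMs)^{-1} = \mat{I} - \mat{U}\mat{D}\mat{U}\tran$. The Eig-$k$ estimator truncates this sum to its first $k$ terms, so the error is $-\sum_{i=k+1}^K \tfrac{\lambda_i}{1+\lambda_i}\,\vec{u}_i\tran\mat{Z}_j\vec{u}_i$. Taking absolute values and invoking the triangle inequality gives
\begin{equation*}
|\partial_j J(\vec{w}) - \widehat{\partial_j J}_\text{eig}(\vec{w})| \leq \sum_{i=k+1}^K \frac{\lambda_i}{1+\lambda_i}\,|\vec{u}_i\tran\mat{Z}_j\vec{u}_i|.
\end{equation*}
Since $\{\vec{u}_i\}$ are orthonormal eigenvectors, each Rayleigh-quotient-like term satisfies $|\vec{u}_i\tran\mat{Z}_j\vec{u}_i| \leq \|\mat{Z}_j\|_2$, which yields the stated bound.

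The only subtle point worth verifying at the outset is that $\mat{Z}_j$ is a genuinely symmetric (in fact, symmetric positive semidefinite) matrix so that the operator norm bound on its Rayleigh quotient is sharp and natural. Using $\FFp^* = \mat{M}^{-1}\FFp\tran$ in the definition~\eqref{eqn:Zj} gives $\mat{Z}_j = \mat{M}^{-1/2}\FFp\tran\mat{E}_j^\sigma\FFp\mat{M}^{-1/2}$, and since $\mat{E}_j^\sigma = \sigma_j^{-2}\vec{e}_j\vec{e}_j\tran \otimes \mat{I}_{\Nt}$ (in a suitable factored form) is symmetric positive semidefinite, so is $\mat{Z}_j$. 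Beyond this quick bookkeeping, no real obstacle arises: the result is essentially a direct consequence of the spectral decomposition, so the proof is short and requires no further machinery beyond $\log(1+x)\leq x$ and the extremal characterization of the operator norm.
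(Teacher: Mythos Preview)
Your proof is correct and essentially identical to the paper's: the objective bound is handled the same way, and for the gradient the paper writes the error as $|\trace((\mat{U}\mat{D}\mat{U}\tran - \mat{U}_k\mat{D}_k\mat{U}_k\tran)\mat{Z}_j)|$ and invokes a short lemma ($|\trace(\mat{A}\mat{B})| \le \|\mat{A}\|_2\,\trace(\mat{B})$ for $\mat{B}$ symmetric positive semidefinite), which is exactly your triangle-inequality-plus-Rayleigh-quotient argument packaged as a standalone result. Your observation that $\mat{Z}_j$ is symmetric positive semidefinite is correct but not actually needed for the bound, since $|\vec{u}_i\tran\mat{Z}_j\vec{u}_i| \le \|\mat{Z}_j\|_2$ holds for any matrix by Cauchy--Schwarz.
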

\begin{proof}See Appendix~\ref{ss_grad}. \end{proof}

\noindent The interpretation of this theorem is as follows: if the truncated
eigenvalues, $\{\lambda_{j}\}_{j=k+1}^K$ are very small the errors are
negligible. In practical computations, we choose an appropriate target rank $k$ 
based on the spectral decay of $\HMs$. 

Theorem~\ref{prop:err_exactk} gives the elementwise error in the gradient. If the error in the $2-$norm of the gradient is desired, it can be obtained from
\begin{equation}\label{eqn:gnrm_exactk}
\|\nabla J(\vec{w}) - \widehat{\nabla J}_\text{eig}(\vec{w}) \|_2 \leq \left(\sum_{j=1}^{\Ns} \norm{\mat{Z}_j}^2\right)^{1/2} \sum_{i = k+1}^K \frac{\lambda_i}{1+\lambda_i}.
\end{equation}
Note also that, due to boundedness of the prior-preconditioned forward operator, 
\{$\norm{\mat{Z}_j}\}_{j=1}^\Ns$ remain bounded with respect to mesh refinements. 

%

\subsubsection{Randomized approach}\label{ss_rand_approach} 
The matrix $\HMs$ is symmetric positive semidefinite and has rapidly decaying eigenvalues
(and is exactly of rank $K = \min\{n,n_sn_t\}$). Under these conditions, the
randomized estimator described in section~\ref{ss_logdet} is likely to be
accurate. Hence, we use it to compute estimate the log-determinant
$\logdet(\mat{I} + \HMs)$.

Let $\mat{T}$ be the output of of
Algorithm~\ref{alg:randsvd} applied to $\HMs$.  The objective function can be
approximated as 
\begin{equation}\label{e_logdetest}
	J(\vec{w}) \approx  \widehat{J}_\text{rand}(\vec{w}) \equiv  \logdet(\mat{I}+\mat{T}).
\end{equation}

Next, we consider approximation of the derivative. 
Following the approach in section~\ref{ss_logdet}, we can approximate 
$\HMs \approx \mat{QTQ}\tran = \widehat{\mat{U}}\widehat{\mat{\Lambda}}\widehat{\mat{U}}\tran$. Then,
using the
Woodbury matrix identity, we have 
\begin{equation}\label{equ:Shat}
(\mat{I} + \HMs)^{-1} \approx (\mat{I} + \widehat{\mat{U}}\widehat{\mat{\Lambda}}\widehat{\mat{U}}\tran)^{-1}
= 
\mat{I}-\widehat{\mat{U}}\widehat{\mat{D}}\widehat{\mat{U}}\tran,
\end{equation}
with
\[ 
\widehat{\mat{D}} \equiv
\diag( \hat\lambda_1/(1 + \hat\lambda_1), \ldots, \hat\lambda_\ell/(1 + \hat\lambda_\ell) ) \in
\mathbb{R}^{\ell\times \ell}, 
\] 
where $\hat\lambda_i$ are the eigenvalues of $\mat{T}$.  This approximation can
be used to estimate the gradient $\partial_j J(\vec{w})$ as follows.  Write 
\begin{equation}\label{e_grad_est}
\partial_j J(\vec{w}) \approx \widehat{\partial_j J}_\text{rand}(\vec{w}) 
   =  z_j - \sum_{i=1}^{\ell} \left[ \frac{\hat\lambda_i}{1+\hat\lambda_i}
                               \ip{\widehat{\vec{q}}_i}{\mat{E}_j^\sigma \widehat{\vec{q}}_i} \right], 
\end{equation} 
where $\widehat{\vec{q}}_i  =  \FFp \mat{M}^{-1/2}\widehat{\vec{u}}_i$ for $i=1,\dots,\ell$. The 
constants $z_j$ are as before and can be precomputed in the same manner. 
The steps for the computation of the D-optimal objective and gradient using
the randomized approach are detailed in Algorithm~\ref{alg:randobjgrad}.
\begin{algorithm}[ht]
\caption{Randomized method for estimating ${J}(\vec{w})$ and $\nabla {J}(\vec{w})$.} 
\begin{algorithmic}[1]
\REQUIRE Constants $\{z_j\}_{j=1}^\Ns$, target rank $k$, oversampling parameter $p \geq 0$, iteration count $q \geq  1$ and design $\vec{w}$. 
\ENSURE  OED objective $\widehat{J}_\text{rand}(\vec{w})$ and gradient $\widehat{\nabla J}_\text{rand}(\vec{w})$.
\STATE Apply Algorithm~\ref{alg:randsvd} with $\ell = k + p$ and $q$ to obtain $\mat{T} \in \R^{\ell \times \ell}$ and $\mat{Q} \in \R^{n\times \ell}$. 
\STATE \Evaluate $\widehat{J}_\text{rand}(\vec{w}) = \logdet(\mat{I} + \mat{T})$. 
\STATE \Compute spectral decomposition $[\mat{U}_T,\mat{\Lambda}_T] \!\!=\! \text{eig}(\mat{T})$ 
and let $\widehat{\mat{U}} \!=\! \mat{QU}_T$, $\widehat{\mat{\Lambda}} \!=\! \mat{\Lambda}_T$.
\FOR{$i=1$ to $\ell$}
   \STATE \Compute $\widehat{\vec{q}}_i = \FFp \mat{M}^{-1/2}\widehat{\vec{u}}_i $. 
\ENDFOR
\STATE 
$\widehat{\partial_j J}_\text{rand}(\vec{w}) = z_j - \sum_{i = 1}^\ell \frac{\hat\lambda_i}{1+\hat\lambda_i}
\ip{\widehat{\vec{q}}_i}{\mat{E}^\sigma_j\widehat{\vec{q}}_i}, \quad j = 1, \ldots, \Ns$.
\end{algorithmic}
\label{alg:randobjgrad}
\end{algorithm}

\paragraph{Error analysis} 
The error in the objective function approximation, when using the randomized approach, is quantified by 
the following result: 
\begin{theorem}\label{p_logdet} Let $\mat{T}$ be computed using
Algorithm~\ref{alg:randsvd} with Gaussian random starting guess $\mat\Omega \in
\mathbb{R}^{n\times (k+p)}$ with $p \geq 2$ and $\hat{J}(\vec{w})$ be computed
using~\eqref{e_logdetest}. Then, we have the following estimate for the expected value of the error
\begin{equation} 
\expect[\mat\Omega]{| {J}(\vec{w}) -  \widehat{J}_\text{rand}(\vec{w}) |} \leq
\> \logdet(\mat{I}+\mat{\Lambda}_2) + \logdet(\mat{I}+\gamma^{2q-1}C_{\rm
ge}\mat{\Lambda}_2) , 
\end{equation}
 where $C_{\rm ge}$ is defined in
Theorem~\ref{p_kld}.  
\end{theorem}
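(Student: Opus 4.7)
The plan is to proceed in three steps: first establish the sign of the error, then split it into a deterministic ``truncation'' component and a stochastic ``randomization'' component corresponding to the two $\logdet$ terms in the bound, and finally take expectations using Gaussian matrix moment bounds.

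First I would note that $\mat{T} = \mat{Q}\tran \HMs \mat{Q}$ is a principal compression of the PSD matrix $\HMs$ by a Euclidean-orthonormal matrix $\mat{Q}\in\mathbb{R}^{n\times\ell}$; the Cauchy interlacing theorem then gives $\hat\lambda_i \le \lambda_i$ for $i=1,\dots,\ell$, hence
\[ \widehat{J}_\text{rand}(\vec{w}) = \sum_{i=1}^\ell \log(1+\hat\lambda_i) \le \sum_{i=1}^n\log(1+\lambda_i) = J(\vec{w}), \]
so the absolute value can be dropped and we focus on the nonnegative quantity $J - \widehat{J}_\text{rand}$. Using the partition~\eqref{e_evd} I would split the error as
\[ J(\vec{w}) - \widehat{J}_\text{rand}(\vec{w}) = \underbrace{\bigl[\logdet(\mat{I}+\mat{\Lambda}_1) - \logdet(\mat{I}+\mat{T})\bigr]}_{(A)} \;+\; \underbrace{\logdet(\mat{I}+\mat{\Lambda}_2)}_{(B)}, \]
which identifies $(B)$ as the deterministic first term of the claimed bound. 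It then suffices to prove $\expect[\mat\Omega]{(A)} \le \logdet(\mat{I}+\gamma^{2q-1}C_{\rm ge}\mat{\Lambda}_2)$.

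The key technical step is a deterministic structural bound for randomized subspace iteration, in the spirit of~\cite{SaibabaAlexanderianIpsen17}. Writing $\mat{\Omega}_1 = \mat{U}_1\tran\mat{\Omega}$ and $\mat{\Omega}_2 = \mat{U}_2\tran\mat{\Omega}$ (with $\mat{\Omega}_1$ a.s.\ of full row rank for $p\ge 2$) and tracking how the power iterations amplify the spectral gap, one aims to derive a comparison of the compressed eigenvalues to the exact ones of the form
\[ (A) \;\le\; \logdet\!\bigl(\mat{I} + \gamma^{2q-1}\|\mat{\Omega}_2\mat{\Omega}_1^\dagger\|_2^2\,\mat{\Lambda}_2\bigr), \]
so that the random quantity enters only through a scalar factor multiplying the tail spectrum $\mat{\Lambda}_2$. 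This is the analogue for $\logdet$ of the usual randomized SVD residual bounds, and I expect its derivation via a Hermitian perturbation/interlacing argument to be the main obstacle: one needs both the correct $\gamma^{2q-1}$ scaling and a clean decoupling between the tail spectrum and the random Gaussian factors.

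Finally, taking expectation and using the concavity of $t \mapsto \logdet(\mat{I}+t\mat{\Lambda}_2) = \sum_i\log(1+t\lambda_i)$, Jensen's inequality yields
\[ \expect[\mat\Omega]{(A)} \;\le\; \logdet\!\bigl(\mat{I}+\gamma^{2q-1}\,\expect[\mat\Omega]{\|\mat{\Omega}_2\mat{\Omega}_1^\dagger\|_2^2}\,\mat{\Lambda}_2\bigr), \]
and the Gaussian pseudoinverse moment bound (Halko--Martinsson--Tropp style, already invoked in Theorem~\ref{p_kld}) gives $\expect[\mat\Omega]{\|\mat{\Omega}_2\mat{\Omega}_1^\dagger\|_2^2}\le C_{\rm ge}$, with $p\ge 2$ needed for integrability of the inverse Wishart moment that produces the factor $(p+1)/(p-1)$ in $C_{\rm ge}$. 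Adding $(A)$ and $(B)$ yields the stated inequality.
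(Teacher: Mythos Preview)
The paper does not give an independent proof of this result: its entire argument is ``This is a restatement of~\cite[Theorem~1]{SaibabaAlexanderianIpsen17}.'' Your sketch is a correct outline of how that cited theorem is actually proved---Cauchy interlacing for the sign, the deterministic split into a tail term $\logdet(\mat{I}+\mat{\Lambda}_2)$ plus a structural randomized-subspace-iteration bound involving $\gamma^{2q-1}\|\mat{\Omega}_2\mat{\Omega}_1^\dagger\|_2^2$, then Jensen on the concave map $t\mapsto\logdet(\mat{I}+t\mat{\Lambda}_2)$ combined with the Gaussian pseudoinverse second-moment bound that produces $C_{\rm ge}$---so there is nothing in the present paper to compare against beyond the citation, and your proposal matches the approach of the reference it invokes.
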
 
\begin{proof} This is a restatement of~\cite[Theorem 1]{SaibabaAlexanderianIpsen17}.  
\end{proof}

Next, we present a result that quantifies the error in the gradient using the
randomized approach.  
\begin{theorem}\label{p_grad} Let $\mat{T}$ be
computed using Algorithm~\ref{alg:randsvd} with starting guess $\mat\Omega \in
\mathbb{R}^{n\times (k+p)}$ with $p \geq 2$ and the approximate derivative
$\widehat{\partial_j J}_\text{rand}(\vec{w})$ defined as~\eqref{e_grad_est}. The expected
error in the approximation to $\partial_j J(\vec{w}) $ is 
\[ \expect{|\partial_j
J(\vec{w})- \widehat{\partial_j J}_\text{rand}(\vec{w})|} \> \leq \> \|
\mat{Z}_j\|_2 \left(1+\gamma^{2q-1}C_{\rm ge}\right) \trace(\mat\Lambda_2), 
\] 
where $C_{\rm ge}$ is defined in
Theorem~\ref{p_kld}, $\mat{Z}_j$ is defined in~\eqref{eqn:Zj} and $j=1,\dots,\Ns$.  
\end{theorem}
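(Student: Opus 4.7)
By the Sherman--Morrison--Woodbury identity applied to~\eqref{equ:Shat}, the randomized estimator for the derivative can be rewritten as a single trace,
\[
\widehat{\partial_j J}_{\text{rand}}(\vec{w}) = \trace\bigl((\mat{I}+\mat{Q}\mat{T}\mat{Q}\tran)^{-1}\mat{Z}_j\bigr),
\]
so that the error is
\[
\partial_j J(\vec{w}) - \widehat{\partial_j J}_{\text{rand}}(\vec{w}) = \trace\bigl\{[(\mat{I}+\HMs)^{-1} - (\mat{I}+\mat{Q}\mat{T}\mat{Q}\tran)^{-1}]\mat{Z}_j\bigr\}.
\]
My plan is to reduce this to a deviation between $\HMs$ and its randomized approximation $\mat{Q}\mat{T}\mat{Q}\tran$, then invoke the nuclear-norm subspace-iteration bounds from~\cite{SaibabaAlexanderianIpsen17} that already power Theorems~\ref{p_logdet} and~\ref{p_kld}.

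First I would apply the resolvent identity $\mat{A}^{-1}-\mat{B}^{-1} = \mat{A}^{-1}(\mat{B}-\mat{A})\mat{B}^{-1}$ with $\mat{A}=\mat{I}+\HMs$ and $\mat{B}=\mat{I}+\mat{Q}\mat{T}\mat{Q}\tran$, and then use the cyclic property of trace to rewrite the error as $\trace\bigl((\HMs - \mat{Q}\mat{T}\mat{Q}\tran)\mat{M}_j\bigr)$, where $\mat{M}_j = (\mat{I}+\mat{Q}\mat{T}\mat{Q}\tran)^{-1}\mat{Z}_j(\mat{I}+\HMs)^{-1}$. Since $\HMs\succeq \mat{0}$ and $\mat{Q}\mat{T}\mat{Q}\tran\succeq \mat{0}$, both resolvents have spectral norm at most one, so $\|\mat{M}_j\|_2\leq \|\mat{Z}_j\|_2$. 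Combining this with the trace-duality inequality $|\trace(\mat{X}\mat{Y})|\leq \|\mat{X}\|_*\|\mat{Y}\|_2$ yields the deterministic pathwise bound
\[
|\partial_j J(\vec{w}) - \widehat{\partial_j J}_{\text{rand}}(\vec{w})| \;\leq\; \|\mat{Z}_j\|_2\,\|\HMs - \mat{Q}\mat{T}\mat{Q}\tran\|_*.
\]

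Taking expectation over the Gaussian starting guess $\mat{\Omega}$, it remains to show $\expect[\mat{\Omega}]{\|\HMs - \mat{Q}\mat{T}\mat{Q}\tran\|_*} \leq (1+\gamma^{2q-1}C_{\text{ge}})\trace(\mat{\Lambda}_2)$; this is the main obstacle, because although $\HMs$ and $\mat{Q}\mat{T}\mat{Q}\tran$ are both positive semidefinite, their difference is generally indefinite, so the Schatten-1 norm cannot simply be replaced by the (nonnegative) trace $\trace(\HMs)-\trace(\mat{T})$. To resolve this I would use $P \defeq \mat{Q}\mat{Q}\tran$ and the decomposition $\HMs - \mat{Q}\mat{T}\mat{Q}\tran = (\mat{I}-P)\HMs + P\HMs(\mat{I}-P)$, bound each summand in nuclear norm by Cauchy--Schwarz in the Hilbert--Schmidt inner product applied to $(\mat{I}-P)\HMs^{1/2}$ and $\HMs^{1/2}$, and then apply the expected Frobenius-norm bound on $(\mat{I}-P)\HMs^{1/2}$ which is the workhorse estimate (with constant $C_{\text{ge}}$) in the proof of Theorem~\ref{p_logdet}; the triangle inequality then gives the $(1+\gamma^{2q-1}C_{\text{ge}})$ factor multiplying $\trace(\mat{\Lambda}_2)$ that appears in the stated inequality.
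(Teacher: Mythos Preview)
Your reduction via the resolvent identity and trace duality is valid through the pathwise bound
\[
|\partial_j J(\vec{w}) - \widehat{\partial_j J}_{\text{rand}}(\vec{w})| \;\leq\; \|\mat{Z}_j\|_2\,\|\HMs - \mat{Q}\mat{T}\mat{Q}\tran\|_*,
\]
but the last step does not close. Applying the Schatten--H\"older (Cauchy--Schwarz) inequality to $(\mat{I}-P)\HMs = (\mat{I}-P)\HMs^{1/2}\cdot\HMs^{1/2}$ gives $\|(\mat{I}-P)\HMs\|_* \leq \|(\mat{I}-P)\HMs^{1/2}\|_F\,\|\HMs^{1/2}\|_F$, and the second factor is $\sqrt{\trace(\HMs)}=\sqrt{\trace(\mat\Lambda_1)+\trace(\mat\Lambda_2)}$, not $\sqrt{\trace(\mat\Lambda_2)}$. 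After taking expectations and using the Frobenius bound you cite, the result is at best of order $\sqrt{\trace(\HMs)\,(1+\gamma^{2q-1}C_{\rm ge})\trace(\mat\Lambda_2)}$, which can be arbitrarily larger than $(1+\gamma^{2q-1}C_{\rm ge})\trace(\mat\Lambda_2)$ when the dominant eigenvalues in $\mat\Lambda_1$ are large. In addition, the triangle-inequality splitting $\HMs-P\HMs P=(\mat{I}-P)\HMs+P\HMs(\mat{I}-P)$ already costs an extra factor of two that the stated constant does not accommodate.

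The paper's argument is much shorter and bypasses the nuclear norm entirely. Writing $\mat{S}=(\mat{I}+\HMs)^{-1}$ and $\widehat{\mat{S}}=(\mat{I}+\mat{Q}\mat{T}\mat{Q}\tran)^{-1}$, the error is $\trace\bigl((\widehat{\mat{S}}-\mat{S})\mat{Z}_j\bigr)$; Lemma~\ref{lem:basic} is applied directly with $\mat{B}=\widehat{\mat{S}}-\mat{S}$ to obtain $\|\mat{Z}_j\|_2\,\trace(\widehat{\mat{S}}-\mat{S})$, and one checks that
\[
\trace(\widehat{\mat{S}}-\mat{S}) \;=\; \trace\bigl(\HMs(\mat{I}+\HMs)^{-1}\bigr)-\trace\bigl(\mat{T}(\mat{I}+\mat{T})^{-1}\bigr),
\]
which is exactly the quantity already controlled by $(1+\gamma^{2q-1}C_{\rm ge})\trace(\mat\Lambda_2)$ in~\eqref{e_tre} during the proof of Theorem~\ref{p_kld}. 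So the whole proof is a two-line recycling of the KL-divergence trace estimate, with no resolvent identity and no Schatten-norm bookkeeping. If you want to salvage your route, you would need a direct nuclear-norm bound on $\HMs-\mat{Q}\mat{T}\mat{Q}\tran$ from the randomized-subspace-iteration analysis rather than the Cauchy--Schwarz factorization; the latter is the step that fails.
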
 
\begin{proof} See Appendix~\ref{ss_grad}. 
\end{proof}
The interpretation of Theorem~\ref{p_grad} is similar to Theorems~\ref{p_kld}
and~\ref{p_logdet}. The estimator is accurate if the eigenvalues contained in
$\mat\Lambda_2$ are small relative to $\mat\Lambda_1$. 
It is worth mentioning that $\widehat{\nabla J}_\text{rand}(\vec{w})$ is not
the exact derivative of the approximate objective function
$\widehat{J}_\text{rand}(\vec{w})$ but its accuracy is comparable to the exact
gradient. This is also illustrated numerically in section~\ref{sec:basic_tests}.
%



Theorem~\ref{p_grad} gives the component-wise error in the gradient. Under the same assumptions, it can be shown that 
\begin{equation}\label{eqn:gnrm_rand}
 \expect{\|\nabla J(\vec{w})- \widehat{\nabla J}_\text{rand}(\vec{w})\|_2} \leq 
 \left(\sum_{j=1}^{\Ns} \|\mat{Z}_j\|_2 \right)\left(1+\gamma^{2q-1}C_{\rm ge}\right) \trace(\mat\Lambda_2).
\end{equation}
This follows from the vector norm inequality $\|\cdot\|_2 \leq \|\cdot\|_1$ and the
linearity of expectations. Compared to~\eqref{eqn:gnrm_exactk}, this result is
clearly suboptimal. It is not clear, if the bounds can be further tightened.


\subsubsection{Frozen low-rank approximation of $\FFp$}\label{ss_frozen}

The dominant cost in the evaluation of the objective function and the
derivative is the applications of $\FFp$ to vectors. This operator is typically
low-rank for many applications. Since the operator $\FFp$ is independent of the
weights $\vec{w}$, a low-rank approximation to $\FFp$ can be precomputed at the
start of the optimization routine. We refer to this as the ``Frozen low-rank
approach.'' The idea of precomputing a low-rank SVD was used
in~\cite{HaberMagnantLuceroEtAl12,AlexanderianPetraStadlerEtAl14}, for
computing A-optimal designs. Here we show how this can be used efficiently for 
D-optimal experimental design, and provide a result quantifying the approximation error. 

The matrix $\FFp$ is approximated by the rank-k thin SVD $\widehat{\FFp} \equiv \widehat{\mat{U}}\widehat{\mat{\Sigma}}\widehat{\mat{V}}^*$, where $\widehat{\mat{U}} \in \R^{\Ns\Nt \times k}$ has orthonormal columns, $\widehat{\mat\Sigma} \in \R^{k\times k}$ and $\widehat{\mat{V}} \in \R^{n\times k}$ has columns that are orthonormal with respect to the mass-weighted inner product. 
With this low-rank approximation, the approximate objective function is defined to be  
\[ 
\widehat{J}_\text{froz}(\vec{w})  = \logdet(\mat{I} + \widehat{\FFp}^*\mat{W}\widehat{\FFp}). 
\]
Evaluating $\widehat{J}_\text{froz}(\vec{w})$ still requires computation of the
log-determinant. Fortunately, the low-rank approximation can be used to
evaluate this function efficiently.  
\begin{proposition}
The objective function can be computed as 
\[ \widehat{J}_\text{froz}(\vec{w}) = \log\det(\mat{I} + \widehat{\mat{\Sigma}}\tran\widehat{\mat{U}}\tran\mat{W} \widehat{\mat{U}}\widehat{\mat{\Sigma}}). \]
\end{proposition}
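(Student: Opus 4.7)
The plan is to prove the identity by substituting the low-rank factorization of $\widehat{\FFp}$ into the definition of $\widehat{J}_\text{froz}(\vec{w})$ and then applying the Sylvester (push-through) determinant identity $\det(\mat{I} + \mat{A}\mat{B}) = \det(\mat{I} + \mat{B}\mat{A})$ to collapse the outer $\widehat{\mat{V}}$ factors. The key algebraic fact I will exploit is that the columns of $\widehat{\mat{V}}$ are orthonormal with respect to the mass-weighted inner product, so by the adjoint formula $\widehat{\mat{V}}^* = \widehat{\mat{V}}\tran \mat{M}$ one obtains $\widehat{\mat{V}}^*\widehat{\mat{V}} = \mat{I}_k$. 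Similarly, $\widehat{\mat{U}}$ has Euclidean-orthonormal columns, but this orthonormality is not needed for the identity itself; only the adjoint $\widehat{\mat{U}}^* = \widehat{\mat{U}}\tran$ is used to write $\widehat{\FFp}^* = \widehat{\mat{V}}\widehat{\mat{\Sigma}}\tran\widehat{\mat{U}}\tran$.

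The execution then proceeds in three short steps. First, substitute $\widehat{\FFp} = \widehat{\mat{U}}\widehat{\mat{\Sigma}}\widehat{\mat{V}}^*$ and its adjoint into the definition to obtain
\begin{equation*}
\widehat{J}_\text{froz}(\vec{w}) = \logdet\!\bigl(\mat{I} + \widehat{\mat{V}}\widehat{\mat{\Sigma}}\tran\widehat{\mat{U}}\tran \mat{W}\,\widehat{\mat{U}}\widehat{\mat{\Sigma}}\widehat{\mat{V}}^*\bigr).
\end{equation*}
Second, apply the Sylvester determinant identity with $\mat{A} = \widehat{\mat{V}}\widehat{\mat{\Sigma}}\tran\widehat{\mat{U}}\tran \mat{W}\widehat{\mat{U}}\widehat{\mat{\Sigma}}$ and $\mat{B} = \widehat{\mat{V}}^*$, turning the expression into
\begin{equation*}
\logdet\!\bigl(\mat{I} + \widehat{\mat{V}}^*\widehat{\mat{V}}\,\widehat{\mat{\Sigma}}\tran\widehat{\mat{U}}\tran \mat{W}\,\widehat{\mat{U}}\widehat{\mat{\Sigma}}\bigr).
\end{equation*}
Third, invoke $\widehat{\mat{V}}^*\widehat{\mat{V}} = \mat{I}_k$, which collapses the middle block and yields the claimed formula.

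A small subtlety worth noting (but not really an obstacle) is that $\widehat{\FFp}^*\mat{W}\widehat{\FFp}$ lives in $\L(\Rnm)$, i.e., it acts on the mass-weighted space, whereas the reduced matrix $\widehat{\mat{\Sigma}}\tran\widehat{\mat{U}}\tran\mat{W}\widehat{\mat{U}}\widehat{\mat{\Sigma}}$ is an honest $k\times k$ matrix on Euclidean $\R^k$. However, the determinant of a linear operator on a finite-dimensional space is basis- and inner-product-independent, so the matrix determinant on both sides coincides with $\det$ as applied to the corresponding linear maps; no adjustment factor from $\mat{M}$ appears. Once this point is observed, the computation above is essentially mechanical, with the Sylvester identity doing all of the real work and the mass-weighted orthonormality of $\widehat{\mat{V}}$ providing the one non-trivial simplification.
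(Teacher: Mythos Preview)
Your proof is correct and follows essentially the same route as the paper: compute $\widehat{\FFp}^* = \widehat{\mat{V}}\widehat{\mat{\Sigma}}\tran\widehat{\mat{U}}\tran$ via the adjoint formula $\widehat{\mat{V}}^* = \widehat{\mat{V}}\tran\mat{M}$, then apply the Sylvester determinant identity $\det(\mat{I}+\mat{A}\mat{B})=\det(\mat{I}+\mat{B}\mat{A})$ and use $\widehat{\mat{V}}^*\widehat{\mat{V}}=\mat{I}_k$. Your remark on basis-independence of the determinant is a helpful clarification but not an additional ingredient.
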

\begin{proof}
The formulas $\widehat{\mat{V}}^* = \mat{V}\tran\mat{M}$ and  $\widehat{\FFp}^* = \widehat{\mat{V}}\widehat{\mat{\Sigma}}\tran\widehat{\mat{U}}\tran$ combined with the determinant identity~\cite[Corollary 2.1]{Ou81} give the desired result. 
\end{proof} 
Note that 
computing the objective function using the Frozen low-rank approach only
requires the evaluation of the determinant of an $\ell \times \ell$ matrix, 
instead of an $n\times n$ matrix.
The computation of the derivative, denoted by
$\widehat{\nabla {J}}_\text{froz}(\vec{w})$, is similar to the discussion in the exact
(and randomized) case and is omitted.
We derive the error in the Frozen low-rank approximation when $\widehat{\FFp}$ is the best rank$-k$ approximation to $\FFp$.   
\begin{theorem}\label{thm:froz} 
Let $\widehat{\FFp} = \FFp_k$ be the rank$-k$ truncated singular 
value decomposition to $\FFp$. Then, assuming $w_i \in [0, 1]$, $i = 1, \ldots, \Ns$,   
\[ |{J}(\vec{w}) - \widehat{J}_\text{froz}(\vec{w})| \leq \logdet(\mat{I} + \mat{\Sigma}_2^2),  \]
	with $\mat\Sigma_2$ the diagonal matrix that contains the singular values of $\FFp$ discarded from $\FFp_k$.
\end{theorem}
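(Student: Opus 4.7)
My plan is to push both log-determinants into the data space $\R^{\Nd}$ via the Sylvester identity $\det(\mat{I}+AB)=\det(\mat{I}+BA)$. If one stays in parameter space and expands $\FFp^*\mat{W}^\sigma\FFp=(\widehat\FFp+(\FFp-\widehat\FFp))^*\mat{W}^\sigma(\widehat\FFp+(\FFp-\widehat\FFp))$, the cross terms $\widehat\FFp^*\mat{W}^\sigma(\FFp-\widehat\FFp)$ do not vanish because $\mat{W}^\sigma$ is not a scalar multiple of the identity on $\R^{\Nd}$; in data space, $\FFp\FFp^*$ decouples cleanly along the SVD, since the left singular vectors of $\FFp$ are Euclidean-orthonormal. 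I will then use operator monotonicity of $\logdet(\mat{I}+\cdot)$ on the PSD cone to absorb the weighting.

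Concretely, I set $B\defeq (\mat{W}^\sigma)^{1/2}\FFp\mat{M}^{-1/2}$ and $\widehat B \defeq (\mat{W}^\sigma)^{1/2}\widehat\FFp \mat{M}^{-1/2}$, so that $\HMs(\vec w)=B\tran B$. Sylvester gives $J(\vec w)=\logdet(\mat{I}+BB\tran)$ with $BB\tran =(\mat{W}^\sigma)^{1/2}\FFp\FFp^*(\mat{W}^\sigma)^{1/2}$, and similarly for $\widehat J_\text{froz}$. Writing the SVD as $\FFp=\mat{U}_1\mat\Sigma_1\mat{V}_1^*+\mat{U}_2\mat\Sigma_2\mat{V}_2^*$ with $\widehat\FFp=\mat{U}_1\mat\Sigma_1\mat{V}_1^*$, one verifies $\FFp\FFp^*=\mat{U}_1\mat\Sigma_1^2\mat{U}_1\tran+\mat{U}_2\mat\Sigma_2^2\mat{U}_2\tran$ (the $\mat{M}$-orthogonality of $\mat{V}$ and the definition $\FFp^*=\mat{M}^{-1}\FFp\tran$ collapse the cross terms), so
\[ BB\tran=\widehat B\widehat B\tran+\Delta,\qquad \Delta\defeq (\mat{W}^\sigma)^{1/2}\mat{U}_2\mat\Sigma_2^2\mat{U}_2\tran(\mat{W}^\sigma)^{1/2}\succeq 0. \]
In particular $J(\vec w)\ge \widehat J_\text{froz}(\vec w)$, so the absolute value can be dropped.

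Next, I factor $\mat{I}+\widehat B\widehat B\tran+\Delta=(\mat{I}+\widehat B\widehat B\tran)^{1/2}[\mat{I}+S](\mat{I}+\widehat B\widehat B\tran)^{1/2}$ with $S\defeq (\mat{I}+\widehat B\widehat B\tran)^{-1/2}\Delta(\mat{I}+\widehat B\widehat B\tran)^{-1/2}$; taking $\logdet$ cancels the frozen term and gives $J(\vec w)-\widehat J_\text{froz}(\vec w)=\logdet(\mat{I}+S)$. Since $(\mat{I}+\widehat B\widehat B\tran)^{-1}\preceq \mat{I}$ we have $S\preceq \Delta$, so monotonicity of $\logdet(\mat{I}+\cdot)$ yields $\logdet(\mat{I}+S)\le \logdet(\mat{I}+\Delta)$. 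One more Sylvester step compresses $\Delta$ to $\logdet(\mat{I}+\Delta)=\logdet\bigl(\mat{I}+\mat\Sigma_2\mat{U}_2\tran\mat{W}^\sigma\mat{U}_2\mat\Sigma_2\bigr)$. From~\eqref{equ:W} we have $\sum_{j=1}^{\Ns}\mat{E}_j=\mat{I}_{\Nd}$ with each $\mat{E}_j\succeq 0$, so under $w_i\in[0,1]$ (reading the $\mat{W}$ in the frozen definition as the noise-weighted $\mat{W}^\sigma$, i.e., variances absorbed into the normalization) we get $\mat{W}^\sigma\preceq \mat{I}$; since $\mat{U}_2$ has Euclidean-orthonormal columns, $\mat{U}_2\tran\mat{W}^\sigma\mat{U}_2\preceq \mat{I}$, hence $\mat\Sigma_2\mat{U}_2\tran\mat{W}^\sigma\mat{U}_2\mat\Sigma_2\preceq \mat\Sigma_2^2$, and a last use of $\logdet$-monotonicity delivers $\logdet(\mat{I}+\mat\Sigma_2^2)$.

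The step requiring most care will be the first one: because the $\Rnm$ inner product is $\mat{M}$-weighted, one has to keep track of the distinction between ``$*$'' and ``$\tran$'' when applying Sylvester, and verify that the data-space representation $\FFp\FFp^*$ really does split additively across the SVD. The payoff is that the left singular basis of $\FFp$ is Euclidean-orthonormal and sits next to $\mat{W}^\sigma$ in a Euclidean pairing, which is exactly what makes the contraction $\mat{U}_2\tran\mat{W}^\sigma\mat{U}_2\preceq \mat{I}$ possible once $w_i\in[0,1]$ is used.
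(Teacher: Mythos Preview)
Your approach is essentially the same as the paper's: both move to the data space via Sylvester's identity, use the additive splitting $\FFp\FFp^*=\FFp_k\FFp_k^*+\FFp_{k,\perp}\FFp_{k,\perp}^*$, factor out the ``frozen'' part, and finish by monotonicity once $\mat{W}\preceq\mat{I}$. The paper packages the middle step as a standalone lemma (for Hermitian PSD $\mat{M}\ge\mat{N}$, $\logdet(\mat{I}+\mat{M})-\logdet(\mat{I}+\mat{N})\le\logdet(\mat{I}+\mat{M}-\mat{N})$), but the content is identical to your factorization $\mat{I}+\widehat B\widehat B\tran+\Delta=(\mat{I}+\widehat B\widehat B\tran)^{1/2}(\mat{I}+S)(\mat{I}+\widehat B\widehat B\tran)^{1/2}$.

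There is, however, a genuine slip in the step ``Since $(\mat{I}+\widehat B\widehat B\tran)^{-1}\preceq\mat{I}$ we have $S\preceq\Delta$.'' Writing $X=(\mat{I}+\widehat B\widehat B\tran)^{-1/2}$, the claim is that a symmetric contraction $X$ satisfies $X\Delta X\preceq\Delta$ for every PSD $\Delta$; this is false (try $X=\diag(1,1/2)$ and $\Delta=\bigl(\begin{smallmatrix}1&1\\1&1\end{smallmatrix}\bigr)$, for which $\Delta-X\Delta X$ has negative determinant). What \emph{is} true, and is all you need, is the eigenvalue comparison
\[
\lambda_j(S)=\lambda_j(X\Delta X)=\sigma_j^2(\Delta^{1/2}X)\le\sigma_j^2(\Delta^{1/2})=\lambda_j(\Delta),
\]
by the multiplicative singular value inequality for contractions; this already gives $\logdet(\mat{I}+S)\le\logdet(\mat{I}+\Delta)$. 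Equivalently, apply Sylvester once more to replace $S=X\Delta X$ by $\Delta^{1/2}X^2\Delta^{1/2}$ (same nonzero spectrum), for which $X^2\preceq\mat{I}$ \emph{does} legitimately yield $\Delta^{1/2}X^2\Delta^{1/2}\preceq\Delta$ in the Loewner order. The paper's lemma takes exactly this eigenvalue route. With this correction your argument goes through, and the final contraction step $\mat{U}_2\tran\mat{W}\mat{U}_2\preceq\mat{I}$ is a genuine Loewner-order conjugation and is fine as written.
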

\begin{proof}
See Appendix~\ref{apdx:froz}.
\end{proof}
The interpretation of this theorem is that the Frozen low-rank approach is
accurate so long as the discarded singular values of $\FFp$ are small. Moreover, this
result is notable in the sense that the error is independent of the weight
vector. 

The precomputation of $\widehat{\FFp}$ is the dominant cost of this algorithm
and requires $\mathcal{O}(2k)$ PDE solves. The main computational 
benefit of this approach is that during the optimization iterations no PDE
solves are necessary, making this algorithm very fast in practice.  The
disadvantages are that the errors in the objective function and gradient computation cannot
be controlled since $\widehat{\FFp}$ is fixed; moreover, storing the low-rank
approximation can be expensive. The other major disadvantage is that this
approach is specific to our formulation of the optimal sensor placement problem. 
The assumption that $\FFp$ is independent of the
design parameters may not hold in general. The Eig-$k$ and the randomized
approaches do not make this assumption and are thus more generally
applicable.   Despite its shortcomings, the Frozen low-rank approach can be viable and
very efficient for some problems. 
Finally, we note that a low-rank approximation of $\FFp$ can 
also be used for fast computation of the KL divergence~\eqref{equ:DKL-fd-alt}.

\subsubsection{Computational costs}\label{ssec:precomp}

Here we discuss precomputation of the constants $\{z_j\}_{j=1}^{\Ns}$
defined in~\eqref{eqn:zj}, and the overall computational cost of our
proposed methods.

 \paragraph{Precomputing $\{z_j\}$} The identity matrix can be expressed as the sum of outer products of vectors
$\{\vec{v}_m\}_{m=1}^{\Nt}$ which form the standard canonical basis for
$\R^{\Nt}$, i.e., $\mat{I}_{\Nt} = \sum_{m=1}^{\Nt} \vec{v}_m\vec{v}_m\tran$.
Then with $\vec{e}_j$ and $\mat{E}_j$ defined in~\eqref{equ:W}
\[ \mat{E}_j  = \sum_{m=1}^{n_t} (\vec{v}_m\otimes \vec{e}_j)
(\vec{v}_m\otimes \vec{e}_j)\tran \in \R^{\Nd \times \Nd}. \] 
Using the cyclic property of the trace and with $\mat{f}_{jm}\equiv \FFp \mat{M}^{-1/2}
(\vec{v}_m\otimes \vec{e}_j)$ 
\begin{equation}\label{eqn:zjcomp} 
z_j = \sigma_j^{-2}\sum_{m=1}^{n_t} (\vec{v}_m\otimes \vec{e}_j)\tran ( \FFp
	\mat{M}^{-1/2} )\tran \FFp \mat{M}^{-1/2} (\vec{v}_m\otimes \vec{e}_j)
	= \sum_{m=1}^{\Nt} \mat{f}_{jm}\tran \mat{f}_{jm}.  
\end{equation}
%
%
Computing each $z_j$ requires applying $\FFp \mat{M}^{-1/2}$ to $\Nt$ matrices
$\vec{v}_m\otimes \vec{e}_j$; however, since $\vec{v}_m\otimes \vec{e}_j$ only
has $\Nt$ nonzero columns, precomputing $\{z_j\}_{j=1}^{\Ns}$ costs
$\Ns\Nt$ PDE solves.


\paragraph{Summary of Computational Costs} 
Here we discuss the computational cost of objective and gradient evaluation
using our proposed approaches.  For the spectral approach, the objective
function evaluation requires $\mathcal{O}(k)$ matrix-vector products (matvecs) involving the transformed
forward operator $\FFp$, $\mathcal{O}(k)$ matvecs involving its adjoint
$\FFp^*$; derivative evaluation requires $k$ PDE solves and an
additional computational cost of $\mathcal{O}(k^2n)$ flops. The computational
cost of the randomized approach is similar. The computational cost of our
proposed methods is summarized in Table~\ref{tab:summary}.

\begin{table}[!ht]\centering
\begin{tabular}{c|c|c|c}
Method/Component & Forward solves & Adjoint solves & Precomputation \\ \hline 
Naive & $n$ & $n$ &  - \\ 
Eig-$K$ & $K$ & $K$ & -\\ \hline 
Eig-$k$ & $\mathcal{O}(k)$ & $\mathcal{O}(k)$ & $K$  \\ 
Randomized & $\ell(q+2) $ & $\ell(q+1)$ &   $K$ \\
Frozen  & - & - & $\mathcal{O}(k)$
\end{tabular}
\caption{Summary of computational costs measured in terms of PDE solves. Here
$K = \min\{\Ns\Nt,n\}$ and $k \leq K$ is the target rank. Furthermore, $\ell =
k+p$, where $p \geq 0$ is an oversampling parameter and $q$ is the number of
subspace iterations (see Algorithm~\ref{alg:randsvd}).}
\label{tab:summary}
\end{table}

\paragraph{Discussion}A few remarks regarding Table~\ref{tab:summary}:
\begin{enumerate} \item The first two methods, ``Naive'' and Eig-$K$ involve  
no approximations. In the ``Naive'' method, the matrix $\HMs$ is
constructed explicitly, using which the objective function and the gradient are
computed. By contrast, the Eig-$K$ approach only computes the nonzero
eigenpairs of the matrix. The remaining three methods have some approximation
built into them. 

\item The three approximate methods require some form of precomputation. The
Frozen approach requires $\mathcal{O}(k)$ PDE solves, whereas the Eig-$k$ and
Randomized methods require $K$ PDE solves (to precompute $\{z_j\}$); these
costs can be lowered to $\mathcal{O}(k)$ by using a low-rank approximation
$\widehat{\FFp}$ instead of $\FFp$.

\item It was mentioned earlier that the derivative computations require an
additional $\mathcal{O}(k)$ PDE solves. However, if the approximate truncated
spectral representation is computed using either a Krylov subspace method, or
the randomized approach, then the information content in the intermediate steps
can be reutilized to avoid the additional PDE solves. 
 
\item In the Eig-$k$ approach, the $k$ eigenpairs are computed using an
iterative matrix-free method such as a Krylov subspace solver. 
The asymptotic computational cost of the randomized
approach is comparable with that of Eig-$k$ approach. However, in the
randomized subspace iteration, the matvecs involving $\HMs$ are readily
parallelizable. In contrast, Krylov subspace methods are inherently sequential. 

\item Based on the overall number of PDE solves, the Frozen approach is very
attractive provided the low-rank approximation $\widehat{\FFp}$ remains
sufficiently accurate throughout the optimization iterations. 

\item The computational costs in Table~\ref{tab:summary} assume that the target
rank is the same for all the three approaches, but does not say anything about
the accuracy of each approach. The Frozen approach may overestimate the rank
$k$ needed to accurately approximate $\FFp$. To illustrate this, suppose during
the iteration history (or at the optimal point), only one sensor is active.
This means that the formal rank of $\FFp$ is exactly $K$, but the rank of
$\HMs$ is $\Nt$. The Eig-$k$ and randomized approaches, on the other hand, target the
rank of $\HMs$ rather than $\FFp$ and therefore, can be much more accurate
with a lower computational cost.   

\item In the spectral and randomized methods, the target rank $k$ remains fixed
throughout the optimization iterations.  Development of variations of these
approaches, in which the rank is adaptively adjusted during the optimization
iterations, is subject of our future work. 

\end{enumerate} 

Each of the proposed methods have their own advantages and disadvantages. The
choice of the method used would depend on the context and the specific
application under consideration.  We advocate the randomized approach since it
provides an excellent balance of accuracy, flexibility, efficiency, and ease of
implementation.





\subsection{The optimization problem for finding D-optimal designs}\label{ssec:dopt}
The previous subsections were concerned with efficient methods for computation 
of the D-optimal criterion and its derivatives with respect to the design weights.
Here we formulate the optimization problem to be solved for finding D-optimal designs.
Let $J(\vec{w})$ be as in~\eqref{equ:Jw}. We formulate the optimization problem for finding 
a D-optimal experimental design as follows:
\begin{equation}\label{equ:optim_problem}
   \min_{\vec{w} \in \mathcal{W}} -J(\vec{w}) + \gamma P(\vec{w}),
\end{equation}
where $\mathcal{W} = [0, 1]^\Ns$, $\gamma > 0$, and $P(\vec{w})$ is a
sparsifying penalty function.  A typical approach is to 
to use an $\ell_1$-norm penalty;  in this case, since components of $\vec{w}$
are non-negative, an $\ell_1$-penalty simplifies to $\P(\vec{w}) =
\sum_{j=1}^\Ns w_j$.  

To explicitly enforce binary designs we adopt two strategies. The first
strategy is to use an $\ell_1$-norm penalty function, and then 
threshold the computed optimal weights based on a heuristic; that is, 
we place sensors only in locations whose corresponding weight is above 
a given threshold.  
The second strategy is to solve a sequence of optimization problems with
penalty functions $\{P_\eps\}_{i=1}^{n_\text{cont}}$ that successively
approximate the $\ell_0$-``norm'' as done in~\cite{AlexanderianPetraStadlerEtAl14}. 
In the continuation approach, each successive optimization problem uses
the result of the preceding problem as initial guess---a process commonly referred
to as ``warm starting.''   

In our computations, we solve the OED optimization problem using
\textsc{Matlab}'s interior-point solver provided by the \verb+fmincon+
function;  the objective function and its gradient, computed using our
algorithms, are supplied to the optimization solver, and BFGS approximation to
the Hessian is used.

\section{Model problem}\label{sec:model}
In this section, we describe the model problem used to illustrate our proposed OED methods.
The forward problem is a time-dependent advection-diffusion
equation, and the inverse problem is the inference of the initial state
from sensor measurements of the state variable at discrete points in
time; see
also,~\cite{AkcelikBirosDraganescuEtAl05,FlathWilcoxAkcelikEtAl11,PetraStadler11,AlexanderianPetraStadlerEtAl14}.
The setup of the model problem below is mainly based on~\cite{AlexanderianPetraStadlerEtAl14}.

\paragraph{The governing PDE and the parameter-to-observable map}

Let $\D \subset \R^2$, be a two-dimensional domain 
depicted in Figure~\ref{fig:domain_and_sensors}(left).  The domain boundaries $\partial \D$
include the outer edges as well as the internal boundaries of the rectangles
that model buildings/obstacles. Given an initial state $\ipar$, we solve  a time-dependent 
advection-diffusion equation
for the state variable $u(\vec{x}, t)$: 
\begin{equation}\label{eq:ad}
  \begin{aligned}
    u_t - \kappa\Delta u + \mathbf{v}\cdot\nabla u &= 0 & \quad&\text{in
    }\D\times (0,T), \\
    u(\cdot, 0) &= \ipar  &&\text{in } \D , \\
    \kappa\nabla u\cdot \vec{n} &= 0 &&\text{on } \partial\D \times (0,T),
  \end{aligned}
\end{equation}
where, $\kappa > 0$ denotes the diffusion coefficient, and $T > 0$ is the final
time. We use $\kappa = 0.001$ and $T = 5$ in the numerical experiments below. 
The velocity field $\vec{v}$, shown in Figure~\ref{fig:domain_and_sensors}(left), is computed by
solving a steady Navier-Stokes equation as 
in~\cite{AlexanderianPetraStadlerEtAl14,PetraStadler11}. 
\begin{figure}\centering
\includegraphics[width=0.35\textwidth]{./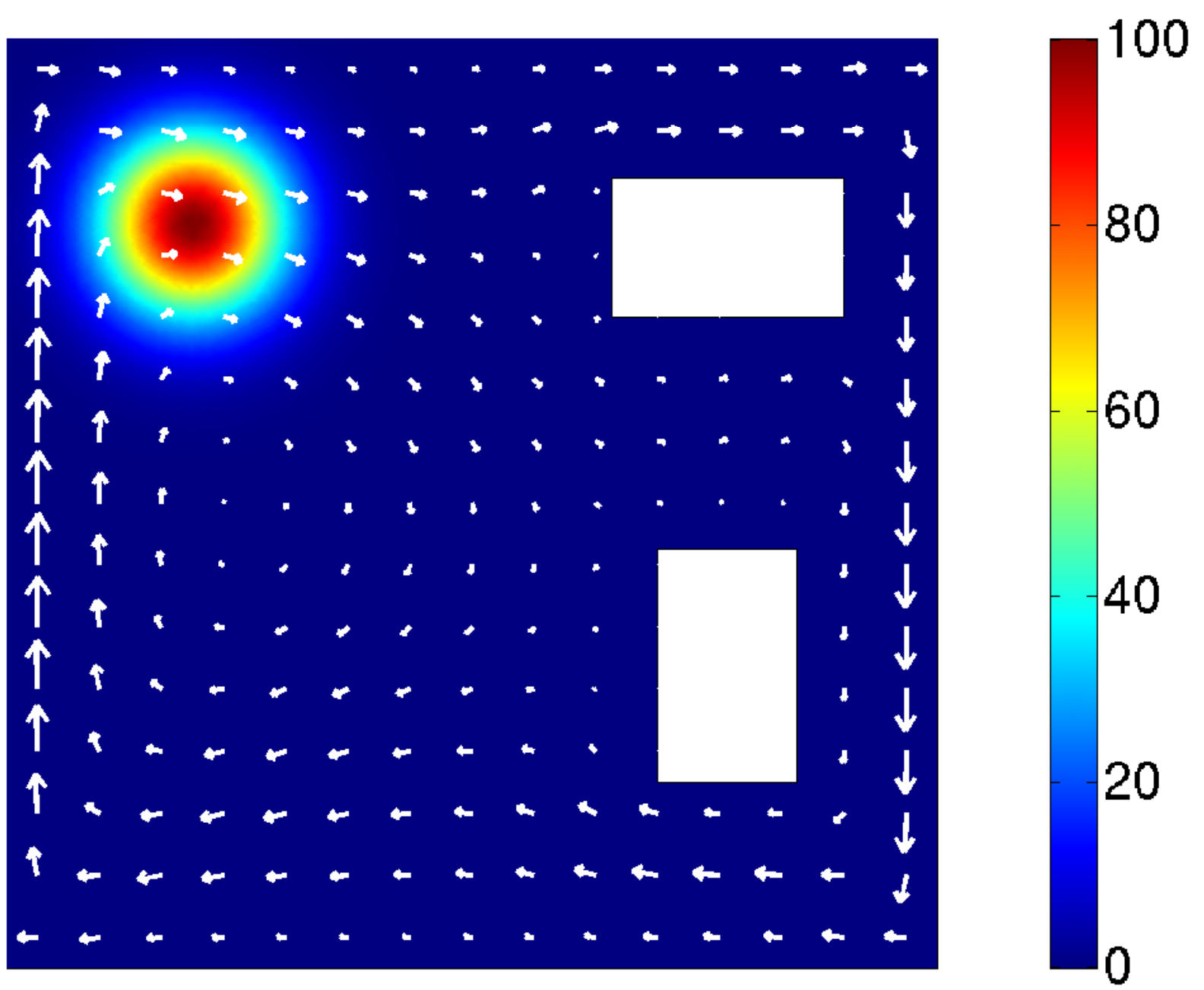}
\includegraphics[width=0.28\textwidth]{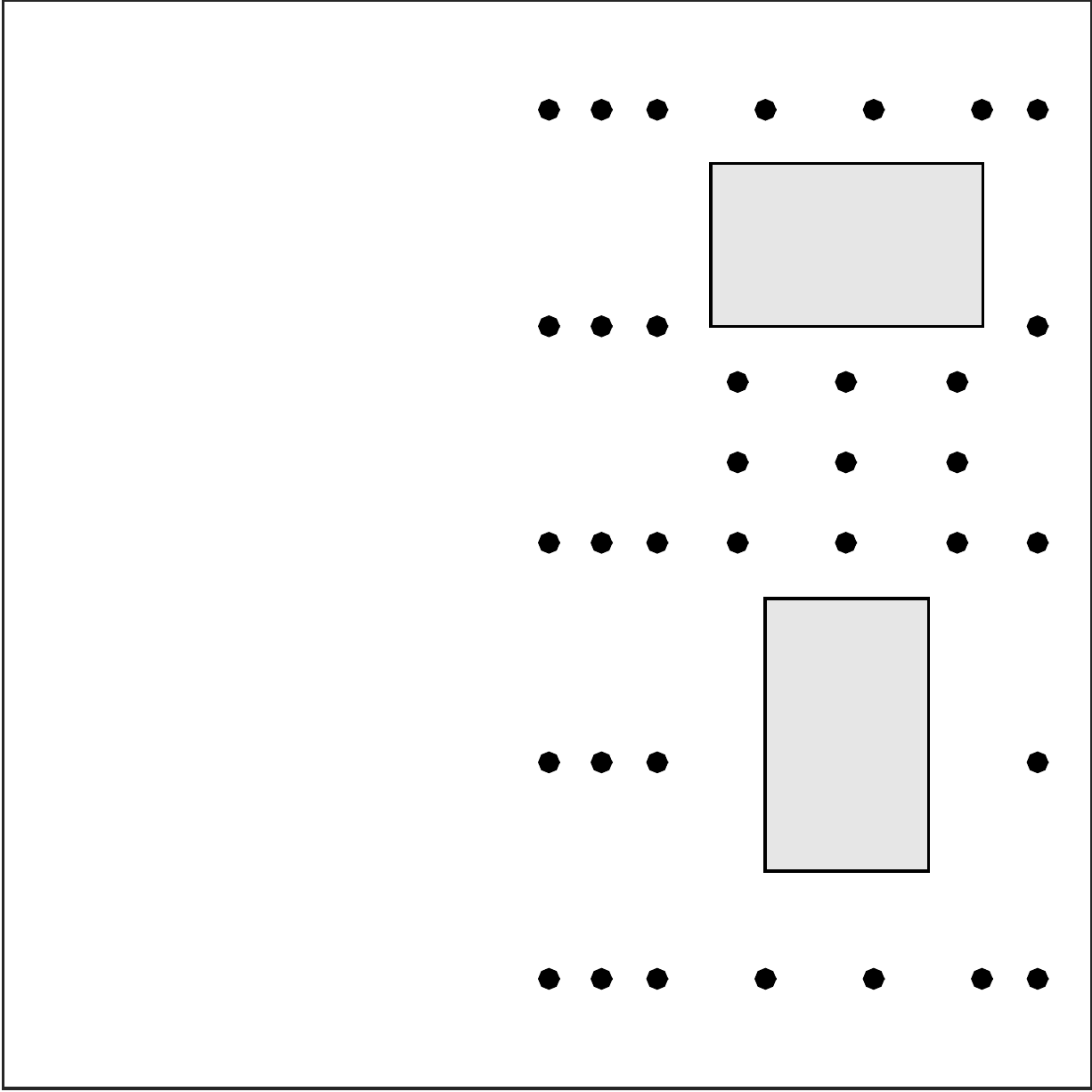}
\includegraphics[width=0.28\textwidth]{./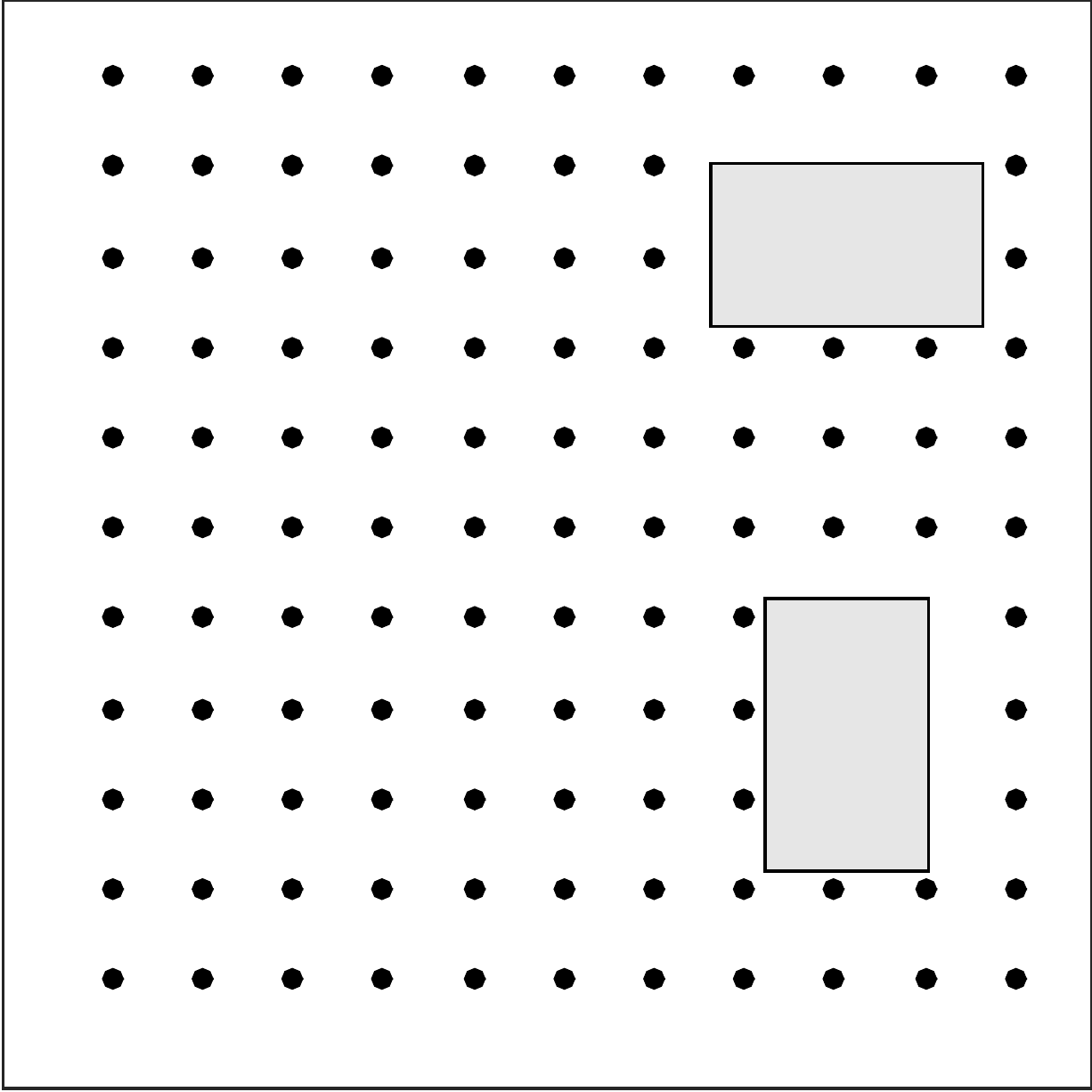}
\caption{The domain $\D$ for the model problem 
is $[0,1]^2$ with the two rectangular regions removed (left); we show the
``true'' initial state and the arrows 
indicate the velocity field.
Two possible grid of candidate sensor locations (middle, right).}
\label{fig:domain_and_sensors}
\end{figure}

The observation operator
denoted by $\mathcal{B}$, returns the values of the state variable $u$ at a set of
sensor locations $\{\vec{x}_1, \ldots, \vec{x}_\Ns\} \subset \D$ at observation
times $\{t_1, \ldots, t_\Nt\} \subset [0, T]$.  Hence, an evaluation of the
parameter-to-observable map, denoted by $F$ in the infinite dimensions, 
involves, first solving the time-dependent
advection-diffusion equation \eqref{eq:ad} to obtain $u=u(\ipar)$, and then
applying the observation operator that returns value of $u$ at the measurement
locations and times. That is, ${F}\ipar = \obs$, where $\obs \in \R^\Nd$ is
given by, $\obs = [\vec{y}_1\tran, \vec{y}_2\tran, \ldots, \vec{y}_\Nt\tran]\tran$, where
$y_j^i$ is the value of $u$ at $\vec{x}_i$ and at observation time $t_j$. 

In what follows, we also need the action 
of the adjoint ${F}^*$ of the parameter-to-observable map ${F}$ to vectors.
For a given observation vector $\dd \in \R^\Nd$, ${F}^*\dd$ is computed by solving the
\emph{adjoint equation} (see~\cite{AkcelikBirosDraganescuEtAl05,
  FlathWilcoxAkcelikEtAl11, PetraStadler11}) for the adjoint variable
$p = p(\vec{x}, t)$,
\begin{equation}\label{eq:ad:adj}
  \begin{aligned}
    -p_t - \nabla \cdot (p \vec{v}) - \kappa\Delta p  &= -\obsop^* \dd 
&\quad&\text{ in
    }\D\times (0,T),\\
    p(\cdot, T) &= 0 &&\text{ in } \D,  \\
    (\vec{v}p+\kappa\nabla p)\cdot \vec{n} &=  0 &&\text{ on }
    \partial\D\times (0,T),  
  \end{aligned}
\end{equation}
and setting ${F}^*\dd = -p(\cdot, 0)$.  
Note that this equation is a final value problem that is solved backwards in time.

\paragraph{Bayesian inverse problem formulation}
The Bayesian inverse problem seeks to infer the initial state $\ipar$ from
point measurements of $u$. Following the setup outlined in
section~\ref{sec:background}, we utilize a Gaussian prior measure $\priorm =
\GM{\iparpr}{\Cprior}$, with $\Cprior$ as described before, and use an additive
Gaussian noise model.  Specifically, we use $\alpha = 2\times 10^{-3}$ and
$\beta = 10^{-1}$ in~\eqref{equ:prior_sqrt}.  The ``true'' initial state shown
in Figure~\ref{fig:domain_and_sensors}(left) is used to synthesize data, and the noise level is set as
follows. We solve the forward model using the true initial state and record
measurements of the state at the candidate sensor locations and at the
observation times (see numerical results section for specifics), and define the
noise level as two percent of the maximum of the recorded measurements.


\paragraph{Discretization}
We discretize the forward and adjoint problems via linear triangular continuous
Galerkin finite elements in the two-dimensional spatial domain,  and use the
implicit Euler method for time integration.  As
in~\cite{PetraStadler11,AlexanderianPetraStadlerEtAl14}, we follow a
discretize-then-optimize approach, where the discrete adjoint equation is
obtained as the adjoint of the discretized forward equation.

%
%
\section{Numerical results}\label{sec:numerics}
In this section, we test various aspects of our proposed numerical methods for
D-optimal sensor placement. Section~\ref{sec:basic_tests} is devoted to testing
accuracy of our randomized estimators, and a simple illustration of D-optimal
sensor placements using the randomized approach. In section~\ref{sec:hires} we
consider a more realistic example in which we allow a denser grid of candidate
sensor locations, and the unknown parameters are discretized on a grid of
higher resolution. The D-optimal sensor placement methods are illustrated on
this application. 

\subsection{Test of accuracy and basic illustrations}\label{sec:basic_tests} In
this subsection, we use the following parameters that specify the test problem: 
we pick the grid of $n_s = 35$ candidate sensor locations, depicted in
Figure~\ref{fig:domain_and_sensors}~(middle), with $n_t = 3$ observation times
given by $t = 1$, $t = 2$, $t = 3.5$, for a total of $105$ observations.  The
discretized parameter dimension for this example is $\Nm = 1018$. 

\paragraph{Accuracy of KL-divergence estimator}
Here we demonstrate the accuracy of the estimators for the
computation of the Kullback-Liebler divergence.
We take $\vec{w}$ to be a $35\times 1$ vector of all ones; i.e.,  all 
the sensors are active.
\begin{figure}[!ht]
\centering
\includegraphics[width=.45\textwidth]{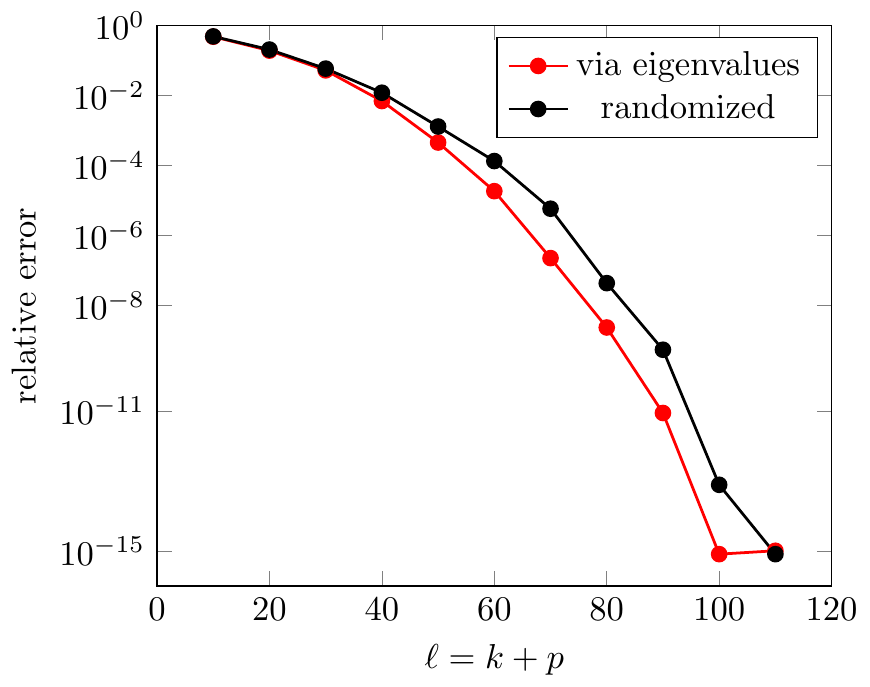}
\caption{Accuracy of KL divergence estimators 
~\eqref{equ:DKL-fd-approxe} (exact eigenvalues) 
and ~\eqref{equ:DKL-fd-approx} (randomized).} 
\label{fig:klaccuracy}
\end{figure} 

We estimate the KL divergence using two different methods proposed in
section~\ref{ss_klde}. In the first method  the KL divergence is computed using
``exact'' eigenvalues~\eqref{equ:DKL-fd-approxe}; the approximation is because
only $k \leq K$ eigenvalues are used for estimating the KL divergence. The
``exact'' eigenvalues were computed using \verb|eigs| function in MATLAB. In
the second method we compute the KL divergence using the randomized
estimator~\eqref{equ:DKL-fd-approx}. The oversampling parameter was fixed to be
$p=5$, we increase the total number of random samples $\ell = k + p$ between $20$
and $105$; since the oversampling parameter is fixed, this amounts to
increasing the target rank $k$. The error in the KL divergence is plotted in
Figure~\ref{fig:klaccuracy}. As we see, the error decays rapidly as the number
of computed eigenvalue increases, and the randomized estimators is nearly as
accurate as using the ``exact'' eigenvalues, but is, in general, considerably
more efficient to compute. 

\paragraph{Accuracy of D-optimal criterion and its derivative} The setup
is the same as the previous experiment; now, we consider the accuracy of the
D-optimal objective function and the gradient. We use Eig-$k$ and randomized 
estimators and the results are plotted in
Figure~\ref{fig:dopt_accuracy}~(left, middle).  Similar conclusions are drawn
here as well: the error decreases with increasing target rank $k$ and the
accuracy of the randomized estimators is comparable with that of Eig-$k$.  
Note that since we used $\mat{W} = \mat{I}$, the frozen and Eig-$k$ approaches
are essentially equivalent, for the present test. 
In our next experiment, we plot the accuracy of the objective
function as a function of mesh refinement in
Figure~\ref{fig:dopt_accuracy}~(right).  The accuracy of the estimators does
not degrade with increasing mesh refinement. This desirable property is also a
numerical illustration of the fact that our formulation remains valid in the
infinite-dimensional limit.


\begin{figure}[!ht]
\centering
\includegraphics[width=1.1\textwidth]{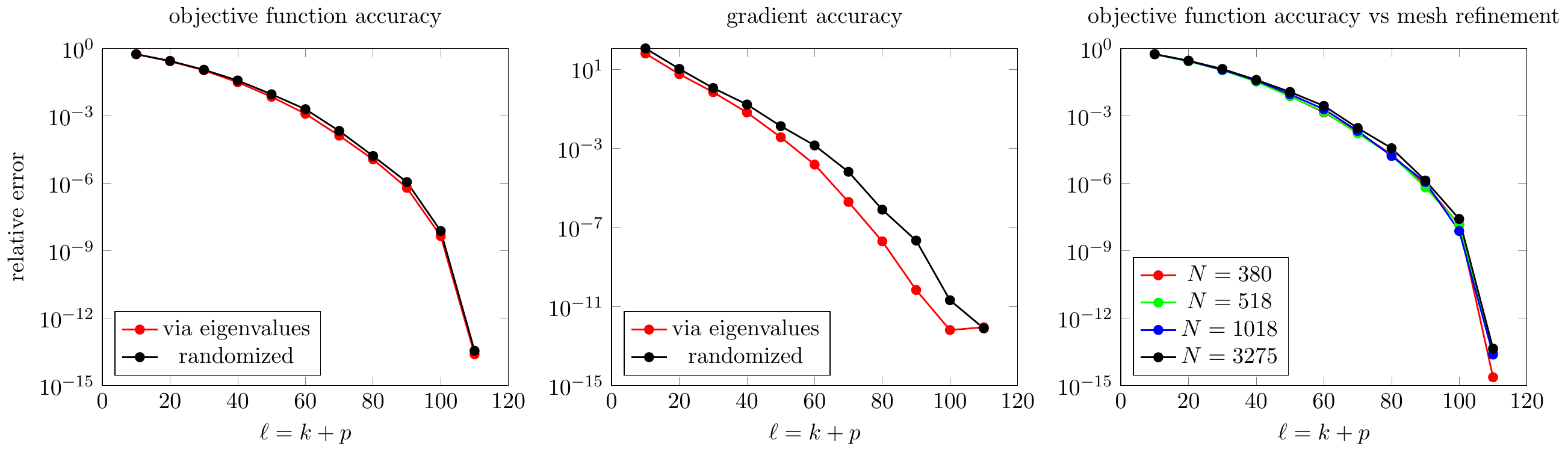}
	\caption{The relative error in the D-optimal objective function (left) and relative error in the $2$-norm of the gradient (center). Performance of randomized estimator with mesh refinement (right). }
\label{fig:dopt_accuracy}
\end{figure}

\paragraph{Computing D-optimal designs} We have demonstrated that our
estimators are accurate; we now show how the corresponding designs look like.
As a first illustration, we use the randomized approach in
section~\ref{ss_rand_approach}, where OED objective 
and gradient are computed according Algorithm~\ref{alg:randobjgrad}, to compute a D-optimal
sensor placement.
The $\ell_1$-norm is chosen as the penalty function to ensure sparsification
(see  section~\ref{ssec:dopt}). The resulting design has weights between $0$
and $1$. To enforce binary weights, the following thresholding criterion was
used: if the sensor weight $w_i$ satisfies $w_i /\sum_j w_j \geq 3\times
10^{-2}$ then it is set to be $1$ else $0$. The resulting designs are plotted
in  Figure~\ref{fig:OED_and_accuracy}~(left). 
We also plot the relative errors in objective and gradient
computations during the course of the optimization iteration history in
Figure~\ref{fig:OED_and_accuracy}~(right). The latter shows that the randomized
estimators remain accurate over the course of the iterations.

\begin{figure}\centering
\includegraphics[width=.4\textwidth]{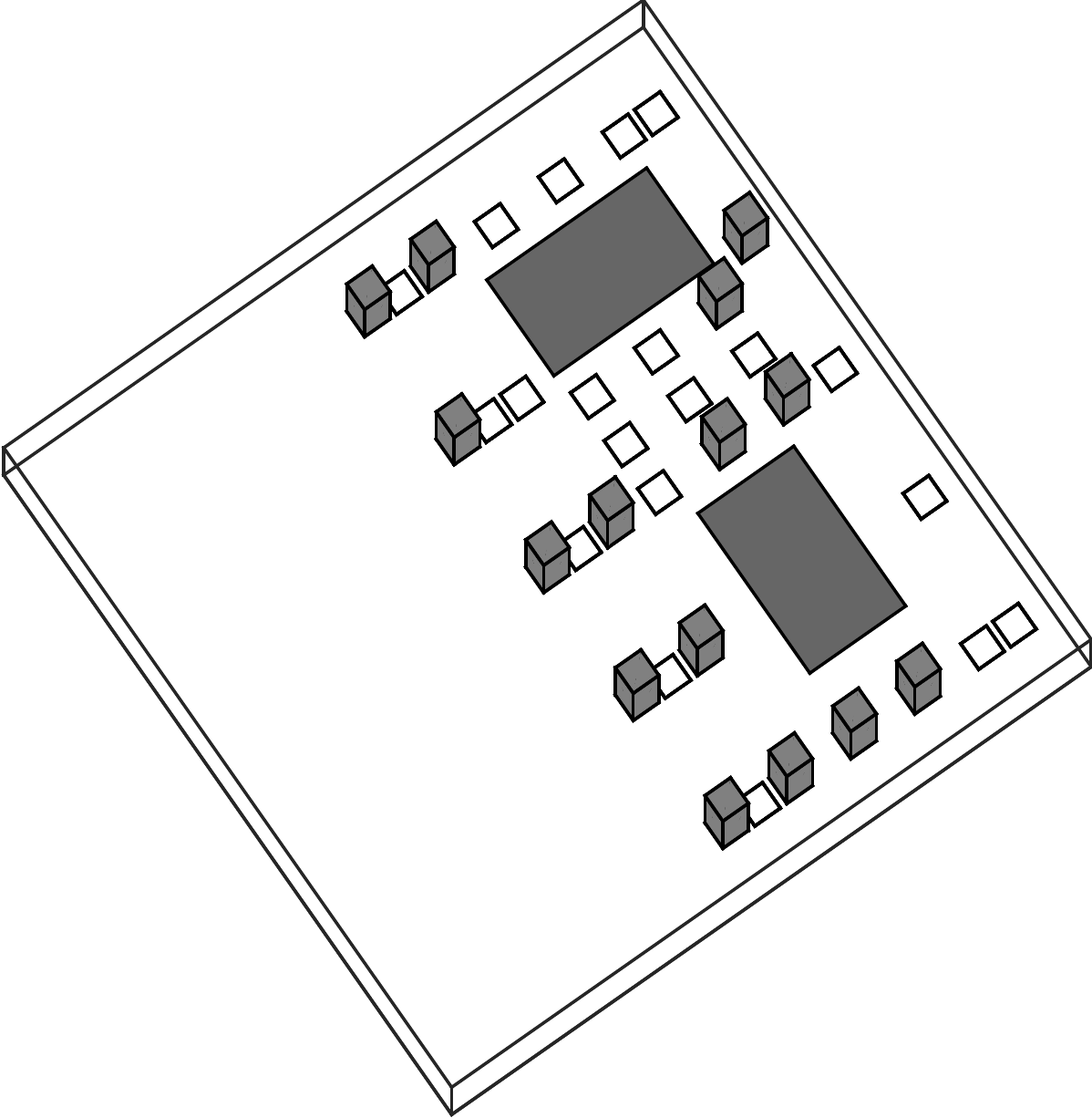}
\includegraphics[width=0.5\textwidth]{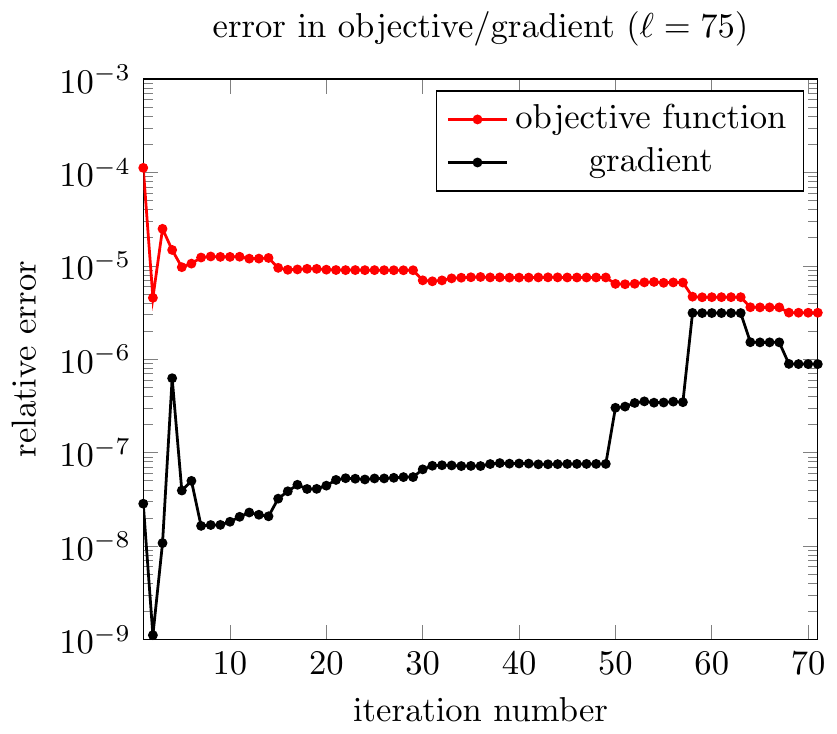}
\caption{Left: D-optimal design with $15$ active sensors; the sensor placement was
obtained using an $\ell_1$-norm penalty approach, with penalty parameter $\gamma = 9$. 
Sensors are placed at locations whose corresponding sensor weight exceeded 
$w_i /\sum_j w_j \geq 3\times 10^{-2}$.
Right: relative error in OED objective and gradient over optimization iterations.}
\label{fig:OED_and_accuracy}
\end{figure}

To compare the different methods proposed in the present work, we compute 
optimal design weights using spectral, randomized, and frozen low-rank approaches;
see Figure~\ref{fig:lowres_comp}. To remain consistent across methods, for each 
method we use rank $k = 70$ approximations with an oversampling parameter of $p = 5$.
We use a log-scale on the vertical axis, and plot the 
computed weights in descending order; reported also are the respective optimal objective 
values. As expected, the randomized and spectral approaches
lead to nearly identical optimal weights; on the other hand, the solution obtained via the
frozen approach, agree with that of the randomized and spectral approach only for the larger
weights.  

\begin{figure}[!ht]
\centering
\includegraphics[width=.75\textwidth]{./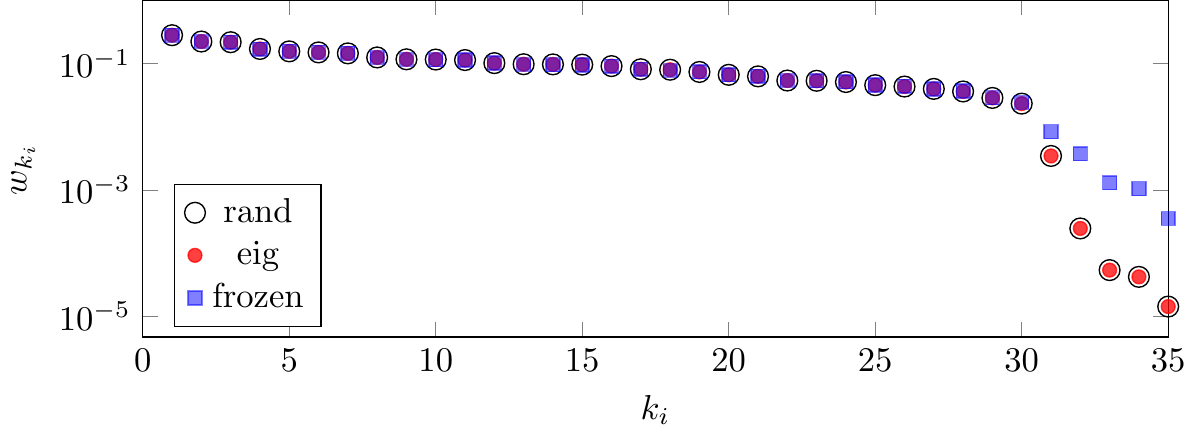}
\caption{
Comparing the optimal design weights obtained using the randomized, spectral, and frozen
low-rank approaches; the objective value at the 
respective solutions were $-72.5847$,  $-72.5848$, $-72.5786$. The problems 
were solved with an $\ell_1$-norm penalty and a penalty parameter of $\gamma = 9$.}
\label{fig:lowres_comp}
\end{figure}

\subsection{A higher resolution example}\label{sec:hires}
Here we illustrate computing a D-optimal sensor placement on a problem 
with parameter dimension $n = 2605$, and a grid of $\Ns = 109$ candidate sensor locations; see
Figure~\ref{fig:domain_and_sensors}(right). 
%
%
To enforce binary weights, we used the continuation approach described in section~\ref{ssec:dopt}. We picked the
penalty parameter $\gamma = 1.8$ and  the penalty functions
$P_\eps$ is as in~\cite[Section 4.5]{AlexanderianPetraStadlerEtAl14}. We use
six continuation steps with $\{P_{\eps_i}\}_{i=1}^{n_\text{cont}}$, $\eps_i =
1/2^i$.  The resulting D-optimal sensor placement is shown in
Figure~\ref{fig:design_hires}~(left) and the convergence of the
successive design vectors to a binary weight vector is shown in Figure~\ref{fig:design_hires}~(right).  
\begin{figure}[!ht]\centering
\includegraphics[width=.33\textwidth]{./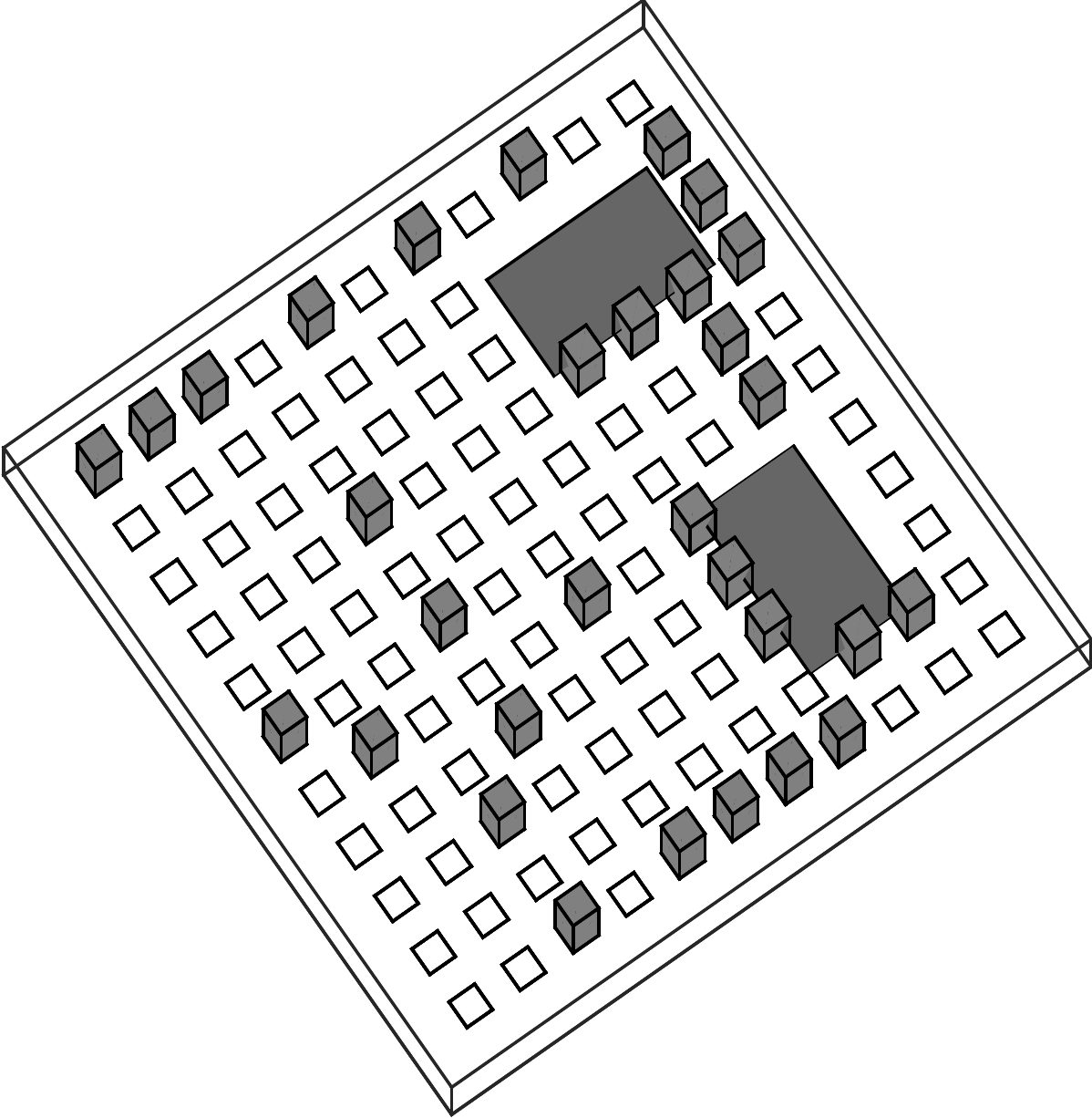}
\includegraphics[width=.33\textwidth]{./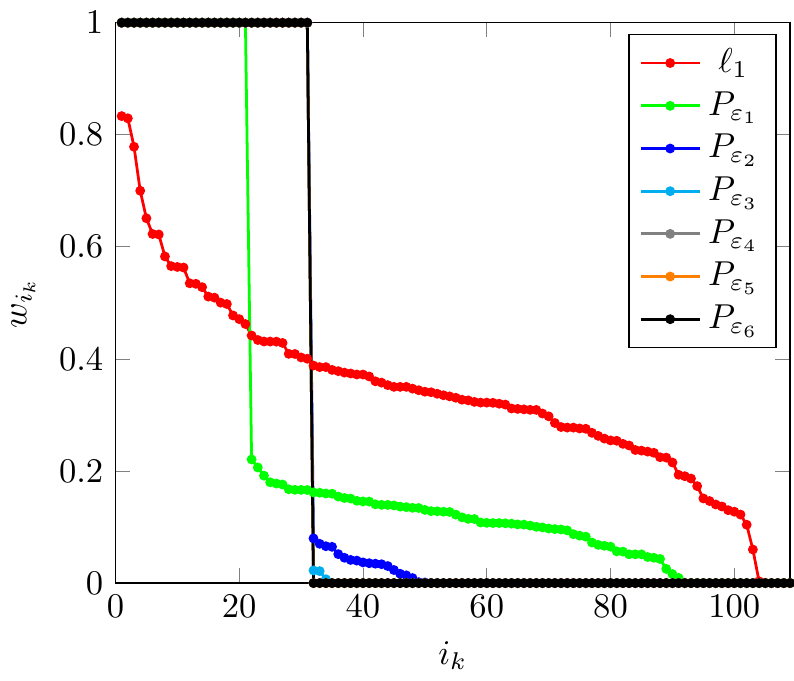}
\caption{Left: a D-optimal design computed on the resolution model problem;
right: the design weights, sorted in descending order, during the
continuation.} \label{fig:design_hires} 
\end{figure}

\paragraph{Testing the effectiveness of the computed D-optimal design} We illustrate
the effectiveness of the computed D-optimal sensor placement by comparing it
with an ensemble of randomly generated sensor placements. Since the
continuation approach yielded $31$ active sensors, the random sensor placements
had the same number of sensors.  For each design (random ones and the optimal
one), we compute the objective value ${J}(\vec{w})$, as
well as the KL divergence from the posterior to prior. To compute the KL
divergence, the inverse problem was solved for each of the sensor placements
and the randomized estimator~\eqref{equ:DKL-fd-approx} was used. 

In the first place, it is expected that the computed optimal design outperform
random designs in terms of smaller OED objective value.  Moreover,
Theorem~\ref{thm:doptimal_criterion} indicates that maximizing
${J}(\vec{w})$ results in
maximizing the expected information gain. Both of these issues are illustrated
in Figure~\ref{fig:cloud}. The red dots in that figure correspond to $500$
randomly generated sensor configurations with $31$ sensors and the black dot to
the computed optimal design. That the black dot falls below all the red dots is
to be expected, because the quantity in the vertical axis, 
$-J(\vec{w}) = -\logdet(\mat{I} + \HMpd(\vec{w}))$, is what we sought to
minimize. On the other hand, the computed D-optimal sensor
placement maximizes the information gain in the average sense specified in
Theorem~\ref{thm:doptimal_criterion}; this explains the location of the
black dot in the horizontal axis. 
\begin{figure}[!ht] \centering
\includegraphics[width=.5\textwidth]{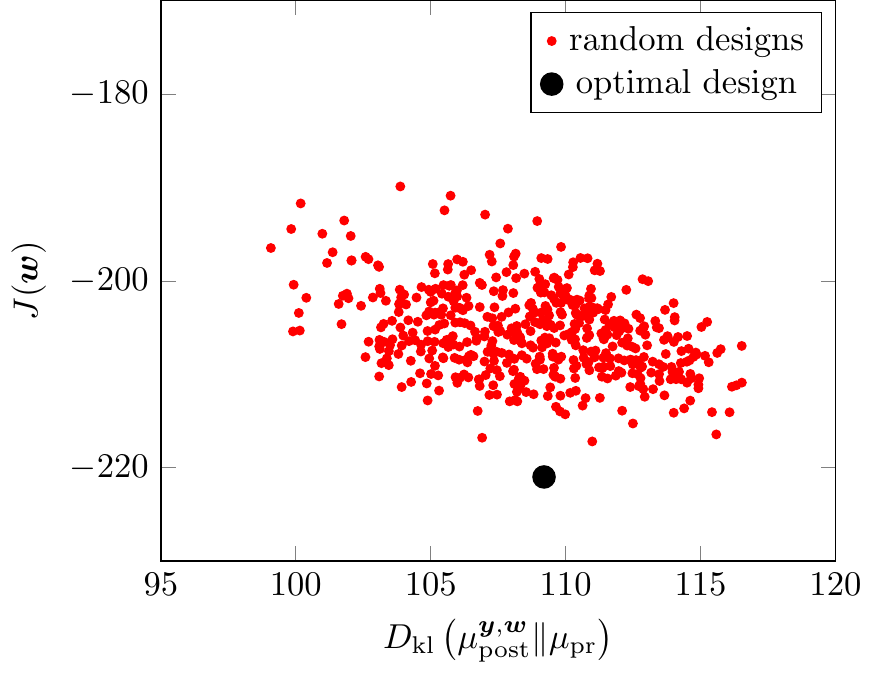} \caption{
Comparing $-J(\vec{w}) =  -\logdet(\mat{I} + \HMpd(\vec{w}))$ and the information gain,
computed at the optimal design (black dot) with $500$ randomly generated
designs (red dots).} \label{fig:cloud} \end{figure}

\paragraph{Solving the inverse problem using the computed optimal design}
Finally, we report the results of solving the Bayesian inverse problem via the computed
D-optimal sensor placement.  Figure~\ref{fig:reconstruct} shows the ``true''
parameter (initial state) and the computed MAP estimator.
Figure~\ref{fig:UQ}~(top) compares the prior and posterior standard deviation
fields, showing reduction in uncertainty and the information gain by using an
optimal sensor placement.  Figure~\ref{fig:UQ}~(bottom) shows three samples
drawn from the posterior distribution.

\begin{figure}[!ht]\centering
\includegraphics[width=.4\textwidth]{./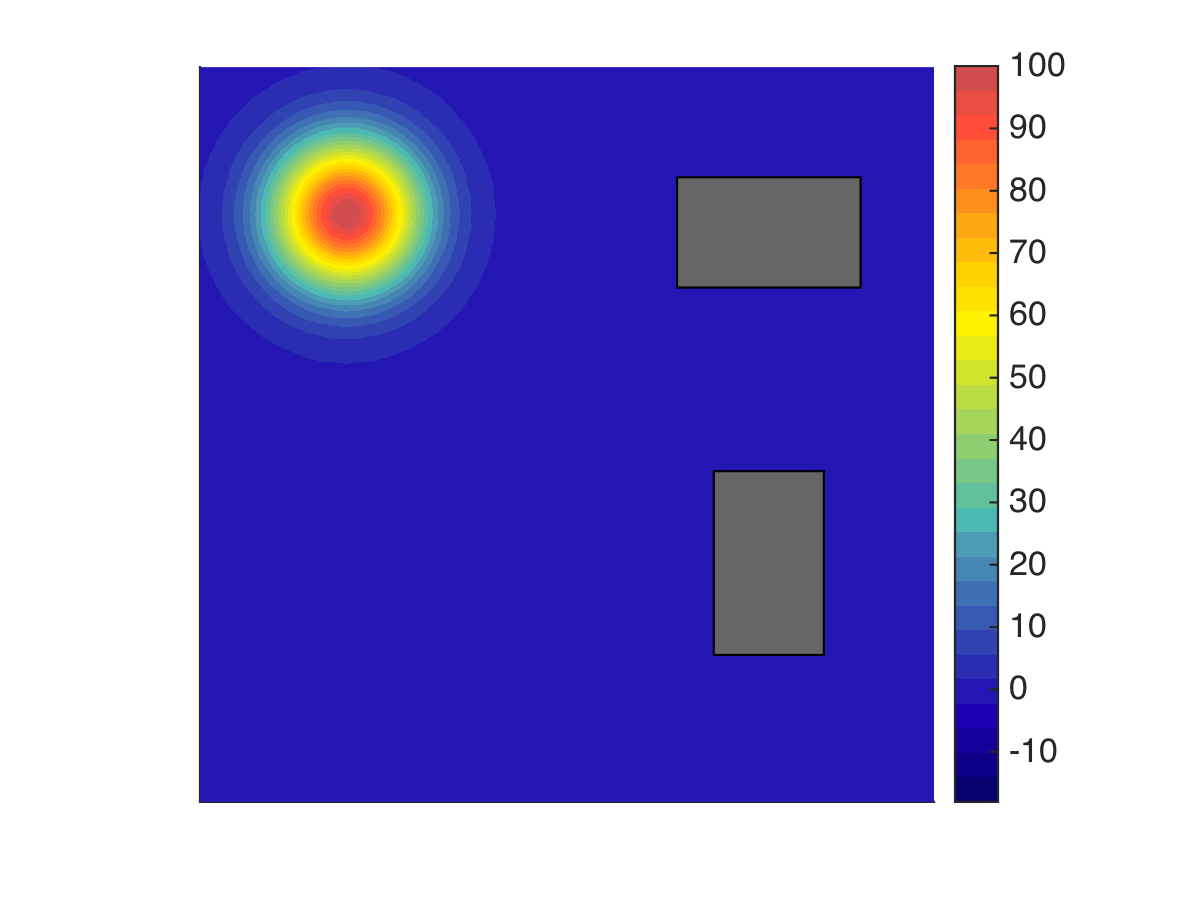}
\includegraphics[width=.4\textwidth]{./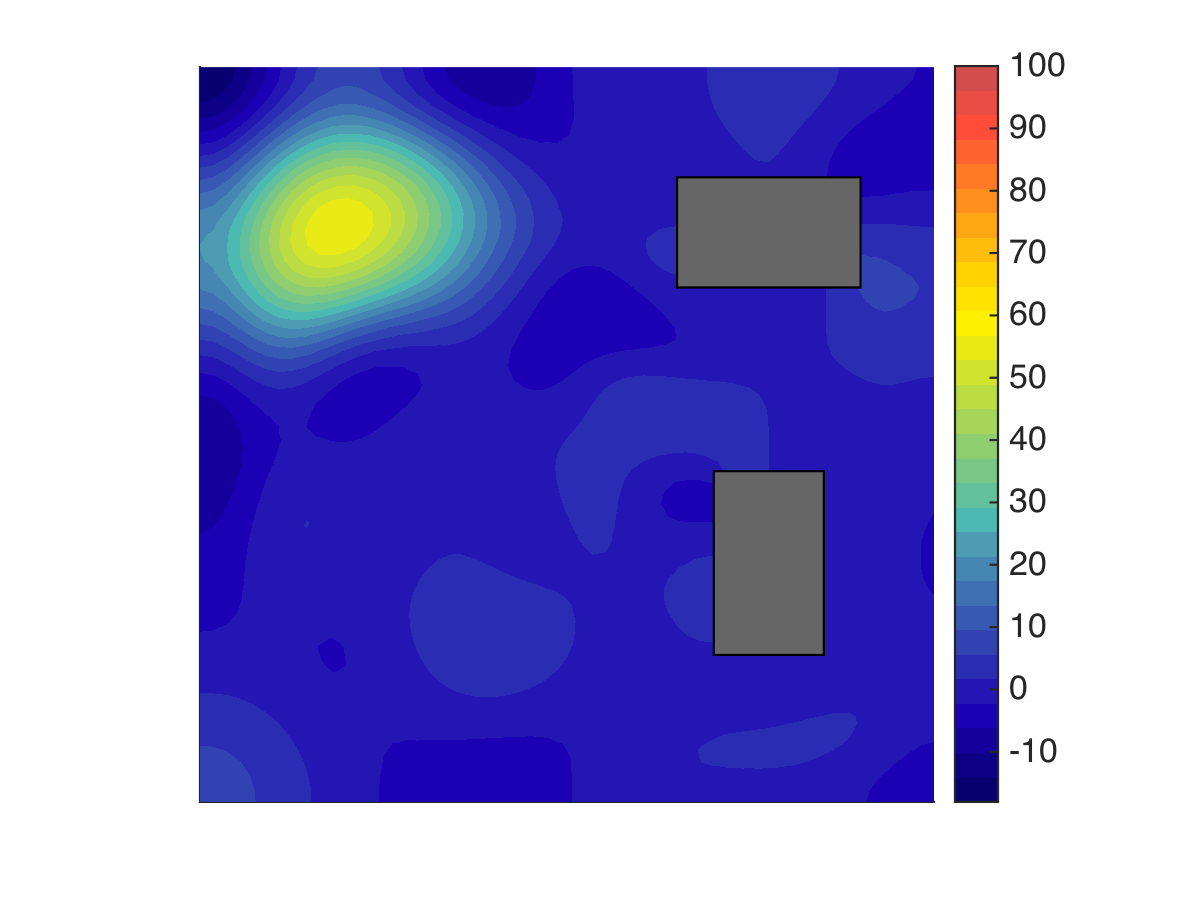}
\\
\caption{The true parameter (left) and the MAP estimator (right).} 
\label{fig:reconstruct}
\end{figure}

\begin{figure}[!ht]\centering
\includegraphics[width=.4\textwidth]{./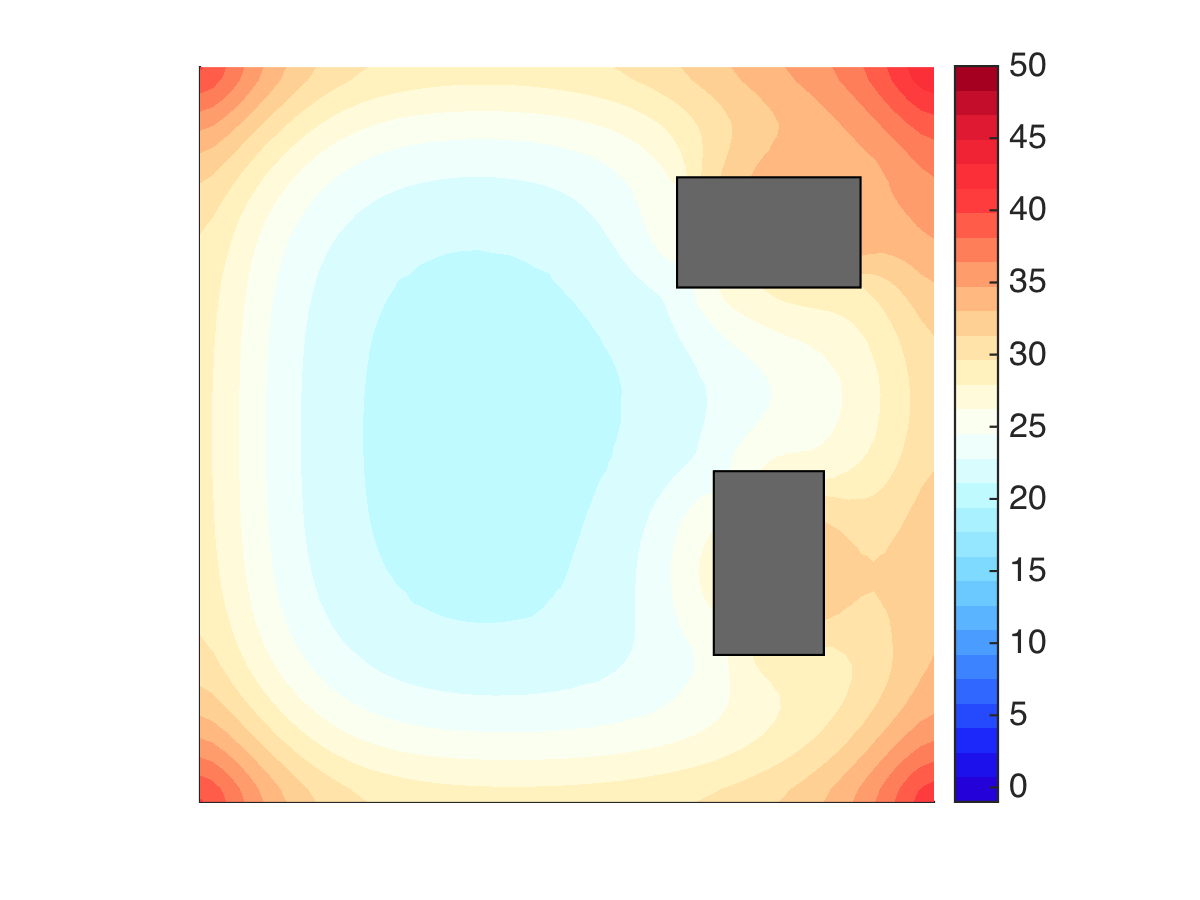}
\includegraphics[width=.4\textwidth]{./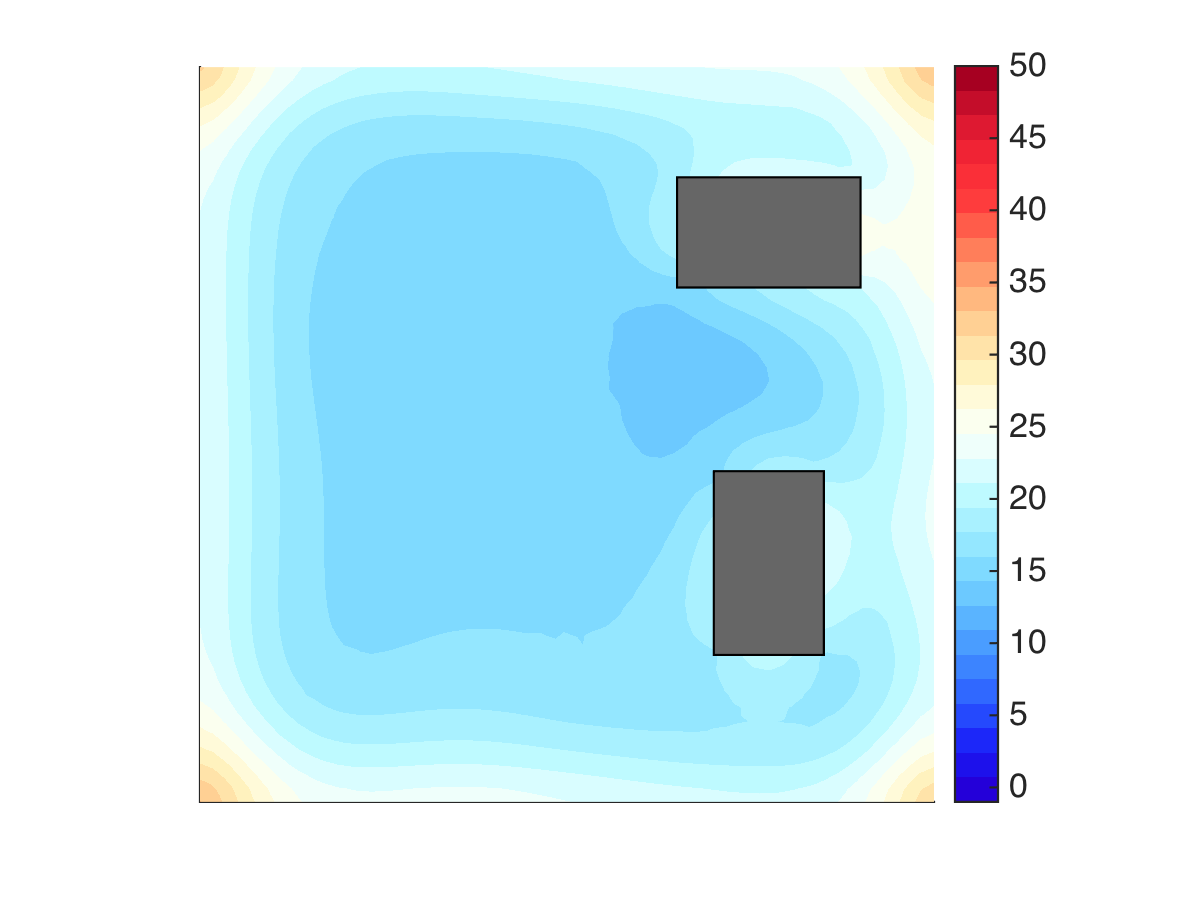}
\\
\includegraphics[width=.32\textwidth]{./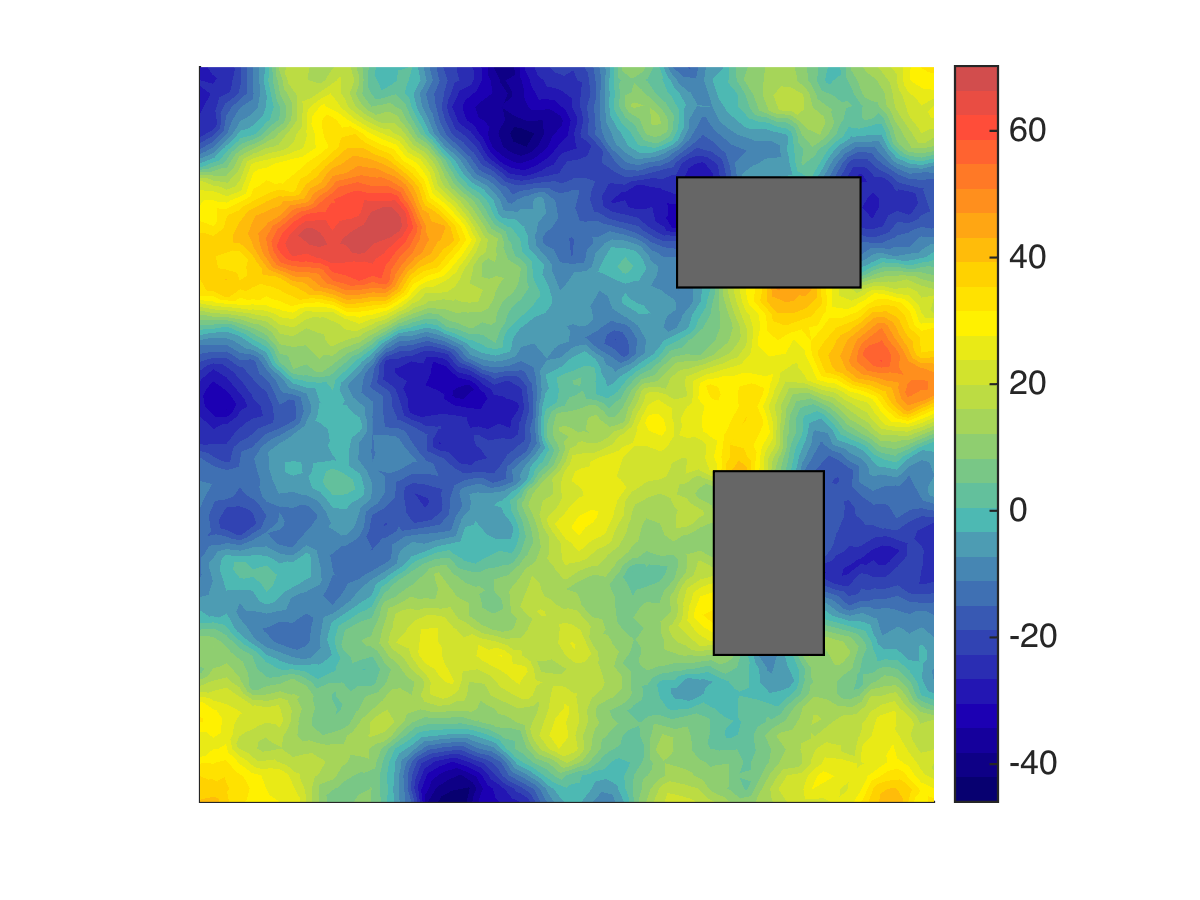}
\includegraphics[width=.32\textwidth]{./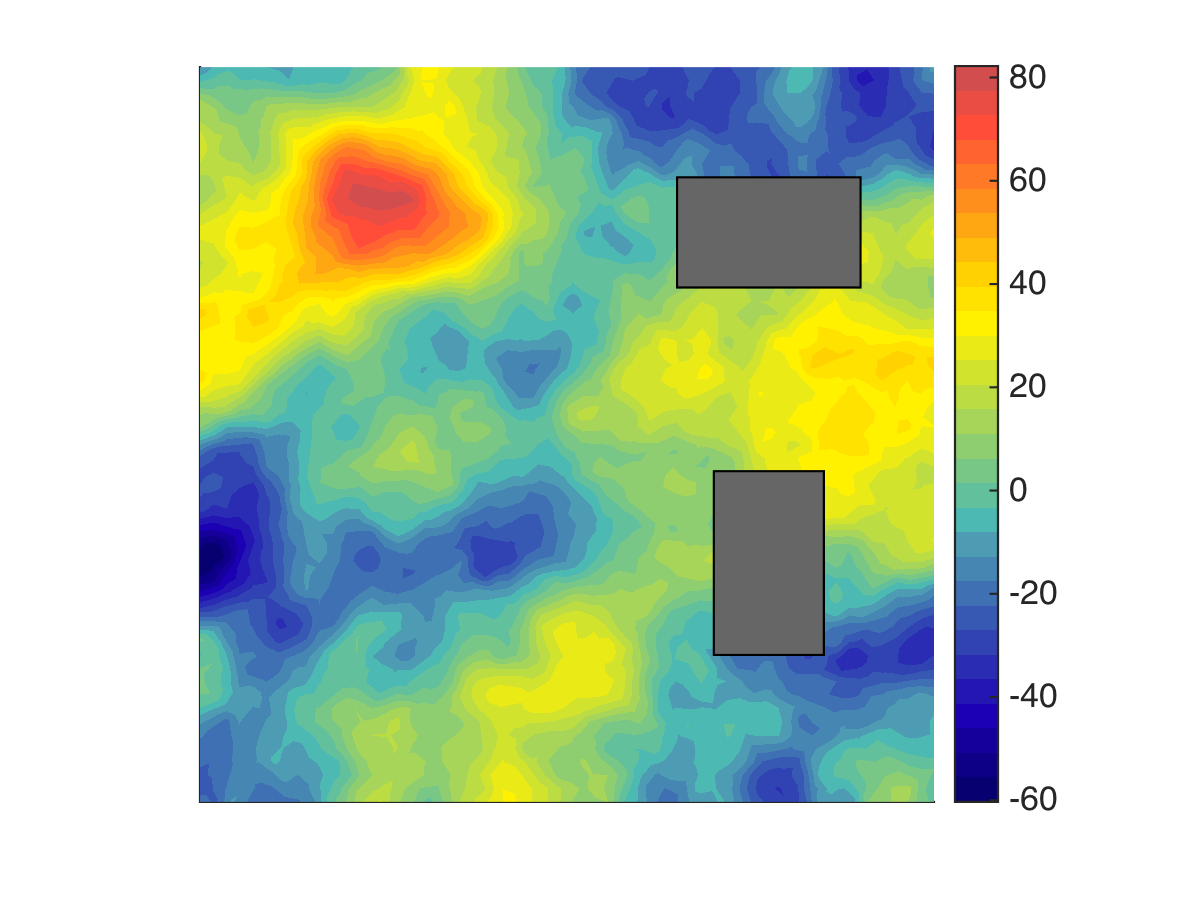}
\includegraphics[width=.32\textwidth]{./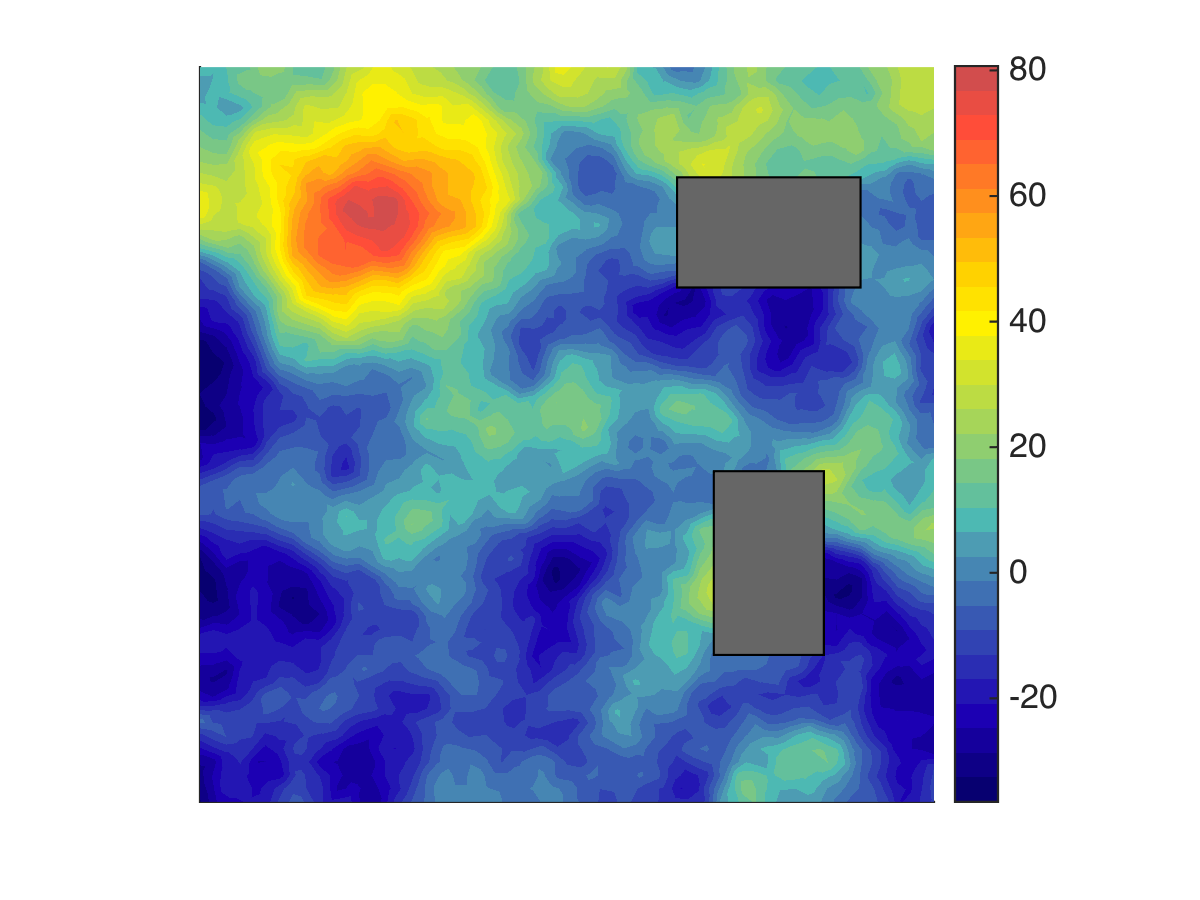}
\caption{The prior (left) and posterior (right) standard deviation fields.}
\label{fig:UQ}
\end{figure}

%
%
\section{Conclusion}\label{sec:conclusions}

We have developed a computational framework for D-optimal experimental design
for PDE-based Bayesian linear inverse problems with infinite-dimensional
parameters.  Our methods exploit low-rank structure of the
prior-preconditioned data misfit Hessian, and the
prior-preconditioned forward operator for fast estimation of 
OED objective and its gradient, as well as the KL divergence from posterior to 
prior.   Our developments resulted in three different approaches:
the low-rank spectral decomposition approach, the randomized approach,
and the approach based on fixed low-rank SVD of the prior-preconditioned forward
operator. Our numerical results, aiming at 
D-optimal sensor placement for initial state inversion in a time-dependent 
advection-diffusion equation, demonstrate the effectiveness of our
methods.


The randomized and spectral approaches provide accurate and efficient
estimators of the OED objective and gradient; the 
accuracy is controlled by setting the target rank $k$, which is 
specified a priori. 
Some open questions remain: can we adaptively determine the target rank $k$
during the optimization iterations so as to ensure the objective function and
the gradient are accurate to a given tolerance, with as few PDE solves as necessary?  
Can suitable error estimators
be incorporated into an optimization routine with theoretical
guarantees of convergence? Can the presented strategy be extended to OED for
nonlinear inverse problems? Addressing these questions is subject of our future
work. 


\section*{Acknowledgments}
This material was based upon work partially supported by the National Science
Foundation under Grant DMS--1127914 to the Statistical and Applied Mathematical
Sciences Institute. Any opinions, findings, and conclusions or recommendations
expressed in this material are those of the author(s) and do not necessarily
reflect the views of the National Science Foundation.”

\bibliography{ncsu_refs}
\bibliographystyle{siam}

\appendix
\newcommand{\noiseinv}{\ncov^{-1}}
\renewcommand{\S}{\mat{S}}
\section{Proofs}
\subsection{Proof of Theorem~\ref{thm:doptimal_criterion}}\label{ssec:doptproof}
We define the operator $\S =  (\HMpd + \mat{I})^{-1}$.
Using the definition of $\dpostmean$ 
we have,
\[
   \begin{aligned}
     \avey{ \cipfd{\dpostmean}{\dpostmean}} 
     &= \avey{\mip{\postcov\FF^*\noiseinv \obs}{\priorcov^{-1} (\postcov\FF^*\noiseinv\obs)}}\\
     &= \avey{ \eip{\obs}{\noiseinv \FF \postcov \priorcov^{-1} \postcov\FF^*\noiseinv\obs}}\\
     &= \avey{ \eip{\obs}{\noiseinv \FF \priorcov^{1/2} \S^2 \priorcov^{1/2}\FF^*\noiseinv\obs}},
   \end{aligned}
\]
where we have used 
that $\postcov = \priorcov^{1/2} \S \priorcov^{1/2}$ in the last step.
Due to our choice of the noise model, $\obs | \dpar \sim \GM{\FF \dpar}{\ncov}$, therefore,
by the formula for the expectation of a quadratic form we have,
\begin{equation}\label{equ:innerexp}
\begin{aligned}
&\avey{ \cipfd{\dpostmean}{\dpostmean}} =\\
&\quad\trace(\noiseinv \FF \priorcov^{1/2} \S^2 \priorcov^{1/2}\FF^*)
+ \eip{\FF \dpar}{\noiseinv \FF \priorcov^{1/2} \S^2 \priorcov^{1/2}\FF^*\noiseinv \FF \dpar}.
\end{aligned}
\end{equation}
Now, the first term, involving the trace, simplifies as follows,
\begin{equation}\label{equ:first}
    \trace(\noiseinv \FF \priorcov^{1/2} \S^2 \priorcov^{1/2}\FF^*) = 
    \trace(\priorcov^{1/2} \S^2 \priorcov^{1/2}\FF^*\noiseinv\FF) = 
    \trace(\S^2 \HMpd).
\end{equation}
Next, we consider the second term in~\eqref{equ:innerexp}, which we average over  
the prior measure: 
\begin{equation*}
\begin{aligned}
\avep{
\eip{\FF \dpar}{\noiseinv \FF \priorcov^{1/2} \S^2 \priorcov^{1/2}\FF^*\noiseinv \FF \dpar}
}
&=\avep{\mip{ \dpar}{\FF\noiseinv \FF \priorcov^{1/2} \S^2 \priorcov^{1/2}\FF^*\noiseinv \FF \dpar}}\\
&=\avep{\mip{\dpar}{\HMd \priorcov^{1/2} \S^2 \priorcov^{1/2}\HMd \dpar}} \\
&= \trace(\HMd \priorcov^{1/2} \S^2 \priorcov^{1/2}\HMd\priorcov) \\
&= \trace(\HMpd \S^2 \HMpd) 
= \trace(\S^2 \HMpd^2).
\end{aligned}
\end{equation*}
This, along with~\eqref{equ:first} and~\eqref{equ:innerexp}, lead to 
\begin{equation}\label{equ:final}
\begin{aligned}
\avep{\avey{ \cipfd{\dpostmean}{\dpostmean}}} 
&=\trace(\S^2 \HMpd + \S^2 \HMpd^2)     \\     
&= \trace\big(\S^2 \HMpd (\mat{I} + \HMpd)\big)   
= \trace(\S^2 \HMpd \S^{-1})               \\ 
&= \trace(\S \HMpd) = \trace(\HMpd(\HMpd + \mat{I})^{-1}). 
\end{aligned}
\end{equation}
The stated result follows by averaging~\eqref{equ:DKL-fd-alt} over the prior
and noise, as in the statement of the theorem, and using~\eqref{equ:final}.\hfill\qedhere

\subsection{Proof of Theorem~\ref{p_kld}}\label{ssec:kld}

Subtracting~\eqref{equ:DKL-fd-approx} from~\eqref{equ:DKL-fd}, and applying the triangle inequality, we obtain 
\begin{equation}\label{e_kl_inter} E_\text{KL} \leq \> \frac12 \left[ |\logdet(\mat{I} + \HMs) -  \logdet ( \mat{I} + \mat{T})| + | \trace(\HMs(\mat{I}+\HMs)^{-1}) - \trace(\mat{T}(\mat{I} + \mat{T})^{-1}) | \right].\end{equation} 
By linearity of expectation, we tackle each term individually. Applying~\cite[Theorem 1]{SaibabaAlexanderianIpsen17}, we obtain 
\begin{equation}\label{e_loge} \expect[\mat\Omega]{|\logdet(\mat{I} + \HMs) -  \logdet ( \mat{I} + \mat{T})|} \leq \> \logdet(\mat{I}+\mat{\Lambda}_2) + \logdet(\mat{I}+\gamma^{2q-1}C_{\rm ge}\mat{\Lambda}_2). \end{equation}
For the second term, label all the eigenvalues of $\HMs$ as $\{\lambda_i\}_{i=1}^n$ and all the eigenvalues of $\mat{T}$ as $\{\tilde{\lambda}_i\}_{i=1}^\ell$. Then 
\begin{align*} \trace(\HMs(\mat{I}+\HMs)^{-1}) - \trace(\mat{T}(\mat{I} + \mat{T})^{-1}) = & \> \sum_{i=1}^n \frac{\lambda_i}{1+\lambda_i} -   \sum_{i=1}^\ell\frac{\tilde\lambda_i}{1+\tilde\lambda_i}\\
= & \> \sum_{i=1}^\ell \frac{\lambda_i - \tilde\lambda_i}{(1+\lambda_i)(1+\tilde\lambda_i) } + \sum_{i=\ell+1}^n\frac{\lambda_i}{1+\lambda_i}.
\end{align*}
From Cauchy interlacing theorem, 
$\lambda_i \geq \tilde\lambda_i \geq 0$ (see~\cite[Lemma 1]{SaibabaAlexanderianIpsen17}) so this expression is 
non-negative. We have the following inequalities 
\begin{align}\label{eqn:ineq1}
\frac{\lambda_i - \tilde\lambda_i}{(1+\lambda_i)(1+\tilde\lambda_i) }    \leq & \> \lambda_i - \tilde\lambda_i & i=1,\dots,\ell\\\label{eqn:ineq2}
\frac{\lambda_i}{1+\lambda_i}  \leq & \>  \lambda_i & i=\ell + 1,\dots,n.
\end{align}  
With these relations, then 
\[ \trace(\HMs(\mat{I}+\HMs)^{-1}) - \trace(\mat{T}(\mat{I} + \mat{T})^{-1}) \leq \> \sum_{i=1}^n\lambda_i - \sum_{i=1}^\ell \tilde\lambda_i = \trace(\HMs) - \trace(\mat{T}).\]
Since both sides of the inequality are nonnegative, we can take absolute values. Then apply~\cite[Theorem 1]{SaibabaAlexanderianIpsen17} to obtain 
\begin{equation}\label{e_tre} \expect[\mat\Omega]{|\trace(\HMs(\mat{I}+\HMs)^{-1}) - \trace(\mat{T}(\mat{I} + \mat{T})^{-1})| } \leq \> (1+C_{\rm ge}\gamma^{2q-1}) \trace(\mat\Lambda_2).\end{equation}
Substitute~\eqref{e_loge} and~\eqref{e_tre} into~\eqref{e_kl_inter} to complete the proof.\hfill\qedhere

\subsection{Proof of Theorems~\ref{prop:err_exactk} and~\ref{p_grad}}\label{ss_grad}
First, we 
record the following basic lemma:
\begin{lemma}\label{lem:basic}
Let $\mat{A}$ be an $n \times n$ matrix and suppose $\mat{B}$ is a symmetric positive
semidefinite matrix. Then, we have
$|\trace(\mat{A}\mat{B})| \leq \norm{\mat{A}}_2\trace(\mat{B})$.
\end{lemma}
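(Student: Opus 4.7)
The plan is to reduce the inequality to a statement about weighted sums of quadratic forms by diagonalizing $\mat{B}$, which is legitimate because $\mat{B}$ is symmetric positive semidefinite. First I would write the spectral decomposition $\mat{B} = \mat{U}\mat\Lambda \mat{U}\tran$ where $\mat\Lambda = \diag(\lambda_1,\dots,\lambda_n)$ with $\lambda_i \geq 0$ and $\mat{U}$ orthogonal with columns $\vec{u}_i$.

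Next, I would apply the cyclic property of the trace to obtain
\begin{equation*}
\trace(\mat{A}\mat{B}) \;=\; \trace(\mat{A}\mat{U}\mat\Lambda\mat{U}\tran) \;=\; \trace(\mat{U}\tran\mat{A}\mat{U}\mat\Lambda) \;=\; \sum_{i=1}^n \lambda_i\,\vec{u}_i\tran\mat{A}\vec{u}_i.
\end{equation*}
Taking absolute values and using the triangle inequality together with nonnegativity of each $\lambda_i$ yields
\begin{equation*}
|\trace(\mat{A}\mat{B})| \;\leq\; \sum_{i=1}^n \lambda_i\,|\vec{u}_i\tran\mat{A}\vec{u}_i|.
\end{equation*}

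The key estimate is then $|\vec{u}_i\tran\mat{A}\vec{u}_i| \leq \|\mat{A}\|_2\,\|\vec{u}_i\|_2^2 = \|\mat{A}\|_2$, which is just Cauchy--Schwarz combined with the operator-norm bound and the fact that the columns of an orthogonal matrix are unit vectors in the Euclidean norm. Substituting this in and recognizing $\sum_i \lambda_i = \trace(\mat{B})$ finishes the proof. There is essentially no obstacle here: the only subtle point is ensuring that the $2$-norm on the left refers to the standard Euclidean operator norm so that $|\vec{x}\tran\mat{A}\vec{x}| \leq \|\mat{A}\|_2\|\vec{x}\|_2^2$ applies directly, which is indeed how $\|\cdot\|_2$ has been used throughout the paper. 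An equivalent one-line alternative would be to invoke von Neumann's trace inequality, $|\trace(\mat{A}\mat{B})| \leq \sum_i \sigma_i(\mat{A})\sigma_i(\mat{B}) \leq \sigma_1(\mat{A})\sum_i\sigma_i(\mat{B})$, and note that for PSD $\mat{B}$ the singular values coincide with the eigenvalues so $\sum_i\sigma_i(\mat{B}) = \trace(\mat{B})$; however, the elementary spectral-decomposition argument above is self-contained and is preferable for an appendix lemma.
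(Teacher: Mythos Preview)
Your proof is correct and is essentially the same as the paper's: both diagonalize $\mat{B}$ via its orthonormal eigenvectors, express $\trace(\mat{A}\mat{B})$ as $\sum_i \lambda_i\,\vec{u}_i\tran\mat{A}\vec{u}_i$, and bound each term by Cauchy--Schwarz and the operator norm. The only cosmetic difference is that the paper writes the trace directly as $\sum_j \langle \vec{e}_j,\mat{A}\mat{B}\vec{e}_j\rangle$ before applying Cauchy--Schwarz, which amounts to the same computation.
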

\begin{proof}
Let $\{ \vec{e}_j \}_{j = 1}^n$ be the orthonormal basis of eigenvectors of $\mat{B}$ with
corresponding  (real, non-negative) eigenvalues) $\{ \lambda_j \}_{j = 1}^n$. 
By Cauchy-Schwartz inequality,
\[
|\trace(\mat{A}\mat{B})| = |\sum_j \ip{\vec{e}_j}{\mat{A}\mat{B}\vec{e}_j}|
           \leq \sum_j \norm{\vec{e}_j}_2 \norm{\mat{A}}_2 \norm{\mat{B}\vec{e}_j}_2 
           = \norm{\mat{A}}_2 \sum_j \lambda_j
           = \norm{\mat{A}}_2 \trace(\mat{B}).
\]
\end{proof}

\begin{proof}[Theorem~\ref{prop:err_exactk}]
The first statement is immediate; the second one follows from 
\[
|\trace( (\mat{UDU}\tran) \mat{Z}_j) - 
\trace( (\mat{U}_k \mat{D}_k \mat{U}_k\tran) \mat{Z}_j)|
\leq \norm{\mat{Z}_j} \sum_{j = k+1}^K \frac{\lambda_i}{1+\lambda_i},
\]
where we have abbreviated $\mat{Z}_j \equiv \mat{M}^{1/2}\FFp^*
\mat{E}^\sigma_j \FFp\mat{M}^{-1/2}$. The inequality is justified in
Lemma~\eqref{lem:basic} in the Appendix.
\end{proof}

\begin{proof}[Theorem~\ref{p_grad}]
From Lemma~\ref{lem:basic}
\[ |\partial_j J(\vec{w})- \widehat{\partial_j J}_\text{rand}(\vec{w})| \> \leq \> \| \mat{Z}_j\|_2 |\trace(\HMs(\mat{I}+\HMs)^{-1}) - \trace(\mat{T}(\mat{I} + \mat{T})^{-1})|.\]
Taking expectations and use~\eqref{e_tre} to obtain the desired bound.
\end{proof}

\subsection{Proof of Theorem~\ref{thm:froz}}\label{apdx:froz}
Before we turn to the proof of Theorem~\ref{thm:froz}, we present the following Lemma. 
\begin{lemma}\label{lemma:logdet} Let $\mat{M},\mat{N} \in \mathbb{C}^{n\times n}$ be Hermitian positive 
semidefinite with $\mat{M} \geq \mat{N}$. Then 
	\[ 0 \leq \logdet(\mat{I}+\mat{M}) - \logdet(\mat{I}+\mat{N}) \leq \logdet(\mat{I} + \mat{M} - \mat{N}).\]
\end{lemma}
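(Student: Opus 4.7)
The plan is to prove the two inequalities separately, with the lower bound being essentially immediate and the upper bound requiring a similarity-reduction argument together with Weyl's monotonicity theorem.

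For the lower bound $0 \leq \logdet(\mat{I}+\mat{M}) - \logdet(\mat{I}+\mat{N})$, I would simply invoke the fact that $\mat{M} \geq \mat{N}$ implies $\mat{I}+\mat{M} \geq \mat{I}+\mat{N} \succ 0$, together with the monotonicity of $\log\det$ on the cone of Hermitian positive definite matrices (equivalently, Weyl's monotonicity theorem applied eigenvalue-by-eigenvalue to $\prod_i \log(1+\lambda_i(\cdot))$).

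For the upper bound, my plan is to factor out $\mat{I}+\mat{N}$. Setting $\mat{D} \equiv \mat{M}-\mat{N} \succeq \mat{0}$ and $\mat{S} \equiv (\mat{I}+\mat{N})^{1/2}$, I would write
\begin{equation*}
\mat{I}+\mat{M} \;=\; \mat{S}\bigl(\mat{I}+\mat{S}^{-1}\mat{D}\mat{S}^{-1}\bigr)\mat{S},
\end{equation*}
so that $\det(\mat{I}+\mat{M}) = \det(\mat{I}+\mat{N})\,\det(\mat{I}+\mat{S}^{-1}\mat{D}\mat{S}^{-1})$. The inequality then reduces to showing $\det(\mat{I}+\mat{S}^{-1}\mat{D}\mat{S}^{-1}) \leq \det(\mat{I}+\mat{D})$.

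The core step is to compare eigenvalues. Because $\mat{N} \succeq \mat{0}$, we have $\mat{S}^{-2} = (\mat{I}+\mat{N})^{-1} \preceq \mat{I}$, which yields
\begin{equation*}
\mat{D}^{1/2}\mat{S}^{-2}\mat{D}^{1/2} \;\preceq\; \mat{D}^{1/2}\mat{D}^{1/2} \;=\; \mat{D}.
\end{equation*}
Now $\mat{S}^{-1}\mat{D}\mat{S}^{-1}$ is similar to $\mat{D}\mat{S}^{-2}$, which shares its nonzero eigenvalues with $\mat{D}^{1/2}\mat{S}^{-2}\mat{D}^{1/2}$; hence applying Weyl's monotonicity theorem gives $\lambda_i(\mat{S}^{-1}\mat{D}\mat{S}^{-1}) \leq \lambda_i(\mat{D})$ for all $i$. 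Taking the product of $(1+\lambda_i(\cdot))$ then yields $\det(\mat{I}+\mat{S}^{-1}\mat{D}\mat{S}^{-1}) \leq \det(\mat{I}+\mat{D})$, and taking logarithms produces the stated bound.

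The main obstacle, and the only non-routine step, is the eigenvalue comparison $\lambda_i(\mat{S}^{-1}\mat{D}\mat{S}^{-1}) \leq \lambda_i(\mat{D})$; the care needed is in routing from the non-symmetric product $\mat{D}\mat{S}^{-2}$ to a Hermitian matrix amenable to Weyl's inequality, which the similarity through $\mat{D}^{1/2}\mat{S}^{-2}\mat{D}^{1/2}$ handles cleanly.
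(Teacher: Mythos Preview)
Your proof is correct and follows essentially the same route as the paper: both factor out $(\mat{I}+\mat{N})^{1/2}$ to reduce the upper bound to $\det(\mat{I}+\mat{X}\mat{E}\mat{X}) \le \det(\mat{I}+\mat{E})$ with $\mat{E}=\mat{M}-\mat{N}$ and $\mat{X}=(\mat{I}+\mat{N})^{-1/2}$, and then establish the eigenvalue-wise inequality $\lambda_j(\mat{X}\mat{E}\mat{X}) \le \lambda_j(\mat{E})$. The only cosmetic difference is that the paper obtains this last inequality via the multiplicative singular value inequality $\sigma_j(\mat{E}^{1/2}\mat{X}) \le \sigma_j(\mat{E}^{1/2})$ (using that $\mat{X}$ is a contraction), whereas you route through $\mat{D}^{1/2}\mat{S}^{-2}\mat{D}^{1/2} \preceq \mat{D}$ and Weyl's monotonicity---these are two phrasings of the same fact.
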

\begin{proof} The inequality $\mat{M} \geq \mat{N}$ is to be interpreted as $\mat{M} - \mat{N}$ is Hermitian positive semidefinite. Write $\mat{X} = (\mat{I}+\mat{N})^{-1/2}$. Then, by multiplicativity of determinants, 
	\[\logdet(\mat{I}+\mat{M}) - \logdet(\mat{I}+\mat{N}) = \logdet \mat{X}(\mat{I}+\mat{M})\mat{X}.\] 
	Write $\mat{M} = \mat{E} + \mat{N}$, with $\mat{E} = \mat{M} - \mat{N}$ and  
	\[ \mat{X}(\mat{I} + \mat{M})\mat{X} = \mat{X}(\mat{I}+ \mat{N})\mat{X} + \mat{XEX} = \mat{I} + \mat{XEX}.\]
	Since $\mat{E}$ is hpsd, it has a well-defined square root. Note that $\mat{X}$ has singular values at most $1$, and is a contraction matrix. Furthermore, the multiplicative singular value inequalities~\cite[page~188]{Bhatia09} imply
	\[ \lambda_j(\mat{XEX}) = \sigma_j^2 (\mat{E}^{1/2}\mat{X})\leq \sigma_j^2(\mat{E}^{1/2}) = \lambda_j(\mat{E}) \qquad j=1,\dots,n.\]
	Writing the determinant as the product of the eigenvalues
	\[ \det(\mat{I}+\mat{XEX}) = \prod_{i=1}^n (1 + \lambda_i(\mat{XEX})) \leq \prod_{i=1}^n (1 + \lambda_i(\mat{E})) =  \det(\mat{I} + \mat{E}).\]
Taking logarithms delivers the desired upper bound. The lower bound
follows from the fact that since $\mat{E}$ is positive semidefinite, and
$\mat{X}$ is Hermitian, $\mat{I} + \mat{XEX} \geq \mat{I}$. Therefore, 
$\logdet(\mat{I} + \mat{XEX}) \geq \logdet \mat{I} = 0$.
\end{proof}

	\begin{proof}[Theorem~\ref{thm:froz}]
	Partition the singular value decomposition as  $\FFp = \FFp_k + \FFp_{k,\perp}$, and observe 
	\[ \FFp\FFp^* = \FFp_k\FFp_k^* + \FFp_{k,\perp} \FFp_{k,\perp}^*.\]
	The determinant identity~\cite[Corollary 2.1]{Ou81} gives 
	\begin{equation}\label{eqn:detiden}
	\log\det(\mat{I} + \FFp^*\mat{W}\FFp) = \logdet(\mat{I} + \mat{W}^{1/2}\FFp\FFp^*\mat{W}^{1/2}).
	\end{equation}
	It can be readily verified that $\FFp\FFp^* \geq \FFp_k\FFp_k^*$; therefore, by the lower bound in Lemma~\ref{lemma:logdet}
	\[ |J(\vec{w}) - \widehat{J}_\text{froz}(\vec{w})| = \log\det(\mat{I} + \FFp^*\mat{W}\FFp)-\log\det(\mat{I} +\FFp_k^*\mat{W}\FFp_k ).  \]

		Use~\eqref{eqn:detiden} along with the upper bound of Lemma~\ref{lemma:logdet}	
	\begin{align*} \log\det(\mat{I} + \FFp^*\mat{W}\FFp) - \log\det(\mat{I} +\FFp_k^*\mat{W}\FFp_k ) \leq& \>  \logdet(\mat{I} + \mat{W}^{1/2}\FFp_{k,\perp}\FFp_{k,\perp}^*\mat{W}^{1/2}) \\ 
		 = & \>  \logdet(\mat{I} + \FFp_{k,\perp}^*\mat{W}\FFp_{k,\perp}).
	\end{align*}
	Since the weights are at most $1$, then $\mat{W} \leq \mat{I}$ and $$\logdet(\mat{I} + \FFp_{k,\perp}^*\mat{W}\FFp_{k,\perp}) \leq \logdet(\mat{I} + \FFp_{k,\perp}^*\FFp_{k,\perp}) = \logdet(\mat{I} + \mat{\Sigma}_2^2).$$
		The inequality is due to the properties of L{o}ewner partial ordering of Hermitian matrices~\cite[Corollary 7.7.4]{HoJ13}. The final equality employs the determinant identity. 
	\end{proof}

\end{document}